
\documentclass[12pt]{amsart}
\usepackage{latexsym,amsmath,amsfonts,amscd,amssymb}
\usepackage{graphics}
\textwidth 6in \oddsidemargin.25in \evensidemargin.25in
\parskip.15cm
\baselineskip.55cm

\newtheorem{theorem}{Theorem}[section]
\newtheorem{lemma}[theorem]{Lemma}
\newtheorem{proposition}[theorem]{Proposition}
\newtheorem{corollary}[theorem]{Corollary}

\newtheorem{definition}[theorem]{Definition}
.

\theoremstyle{remark}

\newcommand{\Id}{\operatorname{Id}}

\newcommand{\Li}{\operatorname{Li}}
\newcommand{\li}{\operatorname{li}}

\newcommand{\Slim}{\operatorname{S-lim}}
\newcommand{\barstar}{\operatorname{\, \bar \star\, }}
\newcommand{\lcm}{\operatorname{lcm}}

\newcommand{\cF}{{\mathcal F}}

\newcommand{\cO}{{\mathcal O}}

\newcommand{\CC}{{\mathbb C}}

\newcommand{\QQ}{{\mathbb Q}}

\newcommand{\ZZ}{{\mathbb Z}}

\renewcommand{\a}{\alpha}
\renewcommand{\b}{\beta}

\newcommand{\g}{\gamma}

\newcommand{\eps}{\epsilon}

\begin{document}
\title[Local monodromy formula of Hadamard  products]
{Local monodromy formula of Hadamard  products}

\subjclass[2000]{08A02, 32S05,  30D99, 30F99, 11G55, 35C05, 33C60, 26A33.} \keywords{Hadamard product, Hadamard grade, 
singularities with monodromy, recurrent monodromy, algebro-geometric singularities, polylogarithm singularities, 
elliptic integrals, hypergeometric functions, fractional integration
.}

\author[R. P\'{e}rez-Marco]{Ricardo P\'{e}rez-Marco}

\address{Ricardo P\'erez-Marco\newline 
\indent  Institut de Math\'ematiques de Jussieu-Paris Rive Gauche, \newline
\indent CNRS, UMR 7586, \newline
\indent Universit\'e de Paris, B\^at. Sophie Germain, \newline 
\indent 75205 Paris, France}

\email{ricardo.perez-marco@imj-prg.fr}

\begin{abstract}
\noindent
We find an explicit general formula for the iterated local monodromy of singularities of the Hadamard product of 
functions with integrable singularities.  The formula implies the 
invariance by Hadamard product of the class of functions with integrable singularities with recurrent monodromies.
In particular, it implies the recurrence of the local 
monodromy of functions with finite Hadamard grade as defined by Allouche and Mend\`es-France. We give other examples 
of natural classes of functions with recurrent monodromies, functions with algebro-logarithmic singularities, 
and more generally with polylogarithm monodromies.
We sketch applications to elliptic integrals, hypergeometric functions, and to  fractional integration.
\end{abstract}

\maketitle

\bigskip

\begin{figure}[ht] \label{fig:mareacion}
\centering
\resizebox{8cm}{!}{\includegraphics{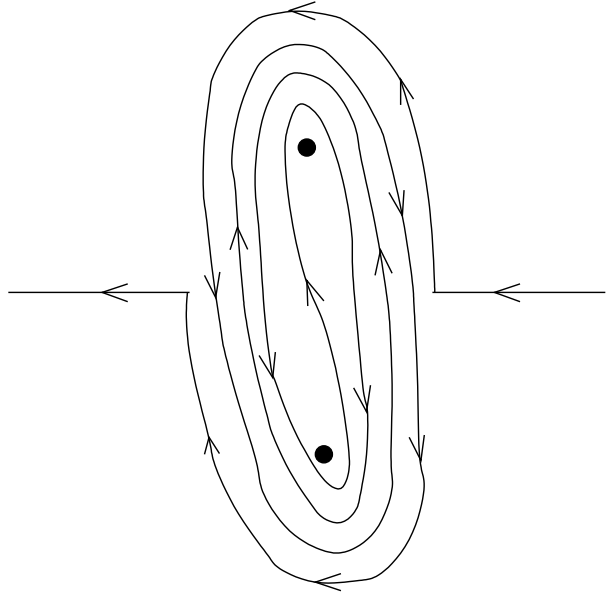}}    
\caption{\footnotesize{``El mundo de la mareaci\'on'' (2003, drawings from my 5 years old son Riki).}} 
\end{figure}

\medskip

\newpage

\section{Introduction.} \label{sec:introduction}

Given  two power series
\begin{align*}
F(z) &=A_0+A_1 z+A_2 z^2+\ldots =\sum_{n\geq 0} A_n \, z^n\\ 
G(z) &=B_0+B_1 z+B_2 z^2+\ldots =\sum_{n\geq 0} B_n \, z^n
\end{align*}
their classical Hadamard product  is defined by 
$$
F\odot G (z) =A_0B_0+A_1 B_1 z+A_2 B_2 z^2+\ldots =\sum_{n\geq 0} A_n B_n \, z^n \ .
$$

In a previous article \cite{PM2020} we gave a formula for the monodromies of the  singularities of 
the Hadamard product $F\odot G$ of two holomorphic 
functions $F$ and $G$ with isolated singularities $(\a)$, resp. $(\b)$, with 
holomorphic monodromies $\Delta_\a F$ and $\Delta_\b G$. More precisely, $\Delta_\a F= F_1-F$ where $F_1$ 
is the branch obtained as the local analytic continuation of $F$ around $\a$ (following a loop of winding number $1$ with respect to $\a$), 
and the monodromy $\Delta_\a F$ is holomorphic if it defines a holomorphic germ at the point $\a$. 
We consider also the operator $\Sigma_\a=\Id+\Delta_\a$, thus $F_1=\Sigma_\a F = F+\Delta_\a F$.

The singularity
at $\a$ is integrable, when we have
$$
\Slim_{z\to \a} (z-\a) \,  F (z) \to 0
$$ 
where the limit is a Stolz limit at $\a$, that is, $z\to \a$ with $\arg (z-\a)$ bounded. In this case 
we have 
$$
\int_{C_\epsilon} F(u) du \to 0
$$
for a loop $C_\epsilon$ around $\a$ with $C_\epsilon \to 0$ when $\eps\to 0$.
We prove the following result:

\begin{theorem}[Hadamard iterated monodromy formula for integrable singularities]
\label{thm:Hadamard_integrable_iterated_monodromy}
We consider $F$ and $G$ holomorphic germs at $0$ with respective set of 
singularities $(\alpha)$ and $(\beta)$ in $\CC$. We 
assume that the singularities are isolated and integrable. For $k\geq 0$, we consider the different branches around the singularities 
$F_k = \Sigma_\a^k F$ and $G_k = \Sigma_\b^k G$.
Then the 
set of singularities of the principal branch of 
$F\odot G$ is contained in the product set $(\gamma)=(\alpha \beta)$ and is 
composed by isolated singularities,
and for $N\geq 1$ we have the formula
\begin{equation}\label{eq:general_monodromy_convolution}
(\Sigma^N_{\g}-\Id) (F\odot G) (z)= - \frac{1}{2\pi i}  \sum_{\substack{\a ,\b \\ \a\b=\g}}  
\sum_{k=0}^{N-1}\int_\a^{z/\b} \Delta_{\a} \big (\Sigma_\a^k F\big ) (u) \, . \, \Delta_{\b} \big ( \Sigma_\b^k G\big ) (z/u) \, \frac{du}{u}
\end{equation}
\end{theorem}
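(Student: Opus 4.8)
The plan is to represent the principal branch of the Hadamard product by the classical multiplicative convolution integral
\[
(F\odot G)(z)=\frac{1}{2\pi i}\oint_{C}F(u)\,G(z/u)\,\frac{du}{u},
\]
where $C$ is a circle about $0$ separating the branch points $u=\a$ of $F$ (lying outside $C$) from the branch points $u=z/\b$ of $u\mapsto G(z/u)$ (lying inside $C$); this is legitimate for $z$ small, i.e. $|z|$ below the product of the two radii of convergence. First I would record that the only way this contour can be pinched, and hence the only source of singularities of the continued integral, is a collision $z/\b=\a$ between an inner and an outer branch point, i.e. $z=\a\b$. This already yields that the singular set of the principal branch is contained in $(\g)=(\a\b)$ and, since the $\a$'s and $\b$'s are isolated, that the resulting singularities are isolated.

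Next I would treat the base case $N=1$, the single variation $\Delta_\g(F\odot G)$. Fixing one pair $(\a,\b)$ with $\a\b=\g$ and localizing near the colliding pair, I would deform $C$ as $z$ runs once around $\g$: the inner branch point $z/\b$ orbits the outer branch point $\a$ once, and the difference between the continued contour and the original one collapses onto a path joining the two colliding branch points. The jump of the integrand across this path is governed precisely by the local monodromies: across $u=\a$ the factor $F$ contributes its variation $\Delta_\a F$, and across $u=z/\b$ the factor $G(z/\cdot)$ contributes $\Delta_\b G$. Carrying this out gives the $k=0$ term
\[
\Delta_\g(F\odot G)(z)=-\frac{1}{2\pi i}\sum_{\substack{\a,\b\\ \a\b=\g}}\int_{\a}^{z/\b}\Delta_\a F(u)\,\Delta_\b G(z/u)\,\frac{du}{u}.
\]
Here the integrability hypothesis does the essential analytic work twice: it makes the segment integral converge at both endpoints, where $\Delta_\a F(u)$ and $\Delta_\b G(z/u)$ are singular, and it forces the auxiliary small-loop integrals produced by the deformation (contributions of the type $\int_{C_\eps}F(u)\,du\to 0$ around the pinch) to vanish in the germ at $\g$, leaving only the convolution of the two monodromies. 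This reproduces, under the weaker integrability assumption, the convolution formula of \cite{PM2020}.

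Finally I would bootstrap to arbitrary $N$ by the operator telescoping identity
\[
\Sigma_\g^{N}-\Id=\sum_{k=0}^{N-1}\Sigma_\g^{k}(\Sigma_\g-\Id)=\sum_{k=0}^{N-1}\Sigma_\g^{k}\,\Delta_\g,
\]
applied to $F\odot G$, so that it suffices to show that $\Sigma_\g^{k}$ turns the $k=0$ term into the $k$-th term. The key point is to continue the segment integral as $z$ runs once more around $\g$. Using the symmetric substitution $u\mapsto z/u$, which interchanges the two endpoints and the roles of $(\a,F)$ and $(\b,G)$, one sees that a single loop of $z$ about $\g$ makes the $F$-endpoint wind once about the branch point $\a$ and simultaneously the $G$-endpoint wind once about $\b$. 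Tracking the branch of the integrand along the dragged path, and again using integrability to discard the vanishing small-loop corrections, shows that each loop replaces $\Delta_\a F$ by $\Sigma_\a\Delta_\a F=\Delta_\a(\Sigma_\a F)$ and $\Delta_\b G$ by $\Sigma_\b\Delta_\b G=\Delta_\b(\Sigma_\b G)$, the commutation $\Sigma_\a\Delta_\a=\Delta_\a\Sigma_\a$ being automatic since both are functions of the single-loop monodromy at $\a$. Iterating $k$ times yields $\Delta_\a(\Sigma_\a^{k}F)$ and $\Delta_\b(\Sigma_\b^{k}G)$, which is the $k$-th term, and summing over $k$ gives \eqref{eq:general_monodromy_convolution}. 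I expect this last step to be the main obstacle: one must justify rigorously that dragging the endpoint of the segment once around the coincident branch point $\a$ increments the monodromy index of both integrand factors by exactly one and produces no surplus boundary or residue term. This is exactly the local analysis of a period-type integral between two colliding branch points — the mechanism underlying hypergeometric and beta-integral monodromy — and controlling it is where the integrability hypothesis, through the vanishing of shrinking loop integrals in the germ at $\g$, is indispensable.
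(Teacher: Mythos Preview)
Your proposal is correct, and in fact coincides with the paper's \emph{second} derivation (its Section~3): the base case $N=1$ is obtained by the contour-pinching argument you sketch, and your bootstrap via the telescoping $\Sigma_\g^N-\Id=\sum_{k}\Sigma_\g^k\Delta_\g$ together with the key claim $\Sigma_\g(\Delta_\a F\barstar\Delta_\b G)=\Delta_\a\Sigma_\a F\barstar\Delta_\b\Sigma_\b G$ is exactly the content of the paper's Corollary~\ref{cor_computation} (rewritten using $\Sigma_\g=\Id+\Delta_\g$), which the paper proves from a two-variable integro-monodromy lemma (Proposition~\ref{prop_integro_monodromy_2_var}) formalizing precisely the ``drag the endpoint once around $\a$'' step you flag as the main obstacle. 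The paper's \emph{primary} proof, however, takes a different and more direct route: it performs the full $N$-fold homotopy deformation of the convolution contour in one go, collapses the resulting multi-loop onto a train track of $2N$ copies of the segment $[1,z_0]$, and reads off all $N$ terms at once from the telescoping identity $\sum_{k=0}^{N-1}(F_k-F_{k+1})\Delta_1 G_k=-\sum_{k=0}^{N-1}\Delta_1 F_k\,\Delta_1 G_k$. That approach never isolates the action of $\Sigma_\g$ on a single convolution as a separate lemma, at the price of tracking a more elaborate path; your approach (and the paper's second proof) packages that action cleanly and makes the morphism property $\Delta_\g\Sigma_\g^k(F\odot G)=\Delta_\a\Sigma_\a^k F\barstar\Delta_\b\Sigma_\b^k G$ immediately visible.
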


For $N=1$ this is the same formula from \cite{PM2020} except for the absence of the residual part, and generalized to integrable singularities (it was proved there 
for holomorphic singularities, which are not necessarily integrable and can produce a residual part).

We use the following notation for the integral convolution\footnote{We use $\barstar$ instead of $\star$ that we reserve for the e\~ne product 
that we consider in \cite{PM2020}.}
$$
F\barstar  G =- \frac{1}{2\pi i}  \int_\a^{z/\b}  F (u) \, . \, G (z/u) \, \frac{du}{u}
$$
Then, the main formula can be written as
\begin{equation*}
(\Sigma^N_{\g}-\Id) (F\odot G) (z)=  \sum_{k=0}^{N-1}  \sum_{ \a\b=\g}  
\Delta_{\a} \Sigma_\a^k F \barstar \Delta_{\b}  \Sigma_\b^k G
\end{equation*}
We have another form of the formula that follows from formula (\ref{eq:general_monodromy_convolution}) and 
the formal identity
$$
\Delta \Sigma^k= \Sigma^{k+1}-\Sigma^k = (\Sigma^{k+1}-\Id)- (\Sigma^{k}-\Id)
$$

\begin{proposition}
We have
$$
\Delta_\g \Sigma_\g^k (F\odot G ) =  \sum_{\a\b=\g}  
\Delta_{\a} \Sigma_\a^k F \barstar \Delta_{\b}  \Sigma_\b^k G
$$
Therefore, the operators $(\Delta_\g\Sigma_\g^k)_{k\in \ZZ}$ define morphism from the Hadamard algebra to 
the integral  convolution algebra in the associated monodromy spaces.
\end{proposition}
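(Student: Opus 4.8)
The plan is to read off the Proposition from Theorem~\ref{thm:Hadamard_integrable_iterated_monodromy} by telescoping, and then to repackage the resulting identity as a morphism statement. Since $\Delta_\g=\Sigma_\g-\Id$ and $\Sigma_\g$ commute, the operator identity
$$
\Delta_\g\Sigma_\g^k=\Sigma_\g^{k+1}-\Sigma_\g^k=(\Sigma_\g^{k+1}-\Id)-(\Sigma_\g^k-\Id)
$$
holds for every $k\in\ZZ$. First I would apply it to $F\odot G$ and insert, for $k\geq 0$, the formula of Theorem~\ref{thm:Hadamard_integrable_iterated_monodromy} for the two terms $(\Sigma_\g^{k+1}-\Id)(F\odot G)$ and $(\Sigma_\g^{k}-\Id)(F\odot G)$, the value $N=0$ producing the empty sum. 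Abbreviating $\Phi_j=\sum_{\a\b=\g}\Delta_\a\Sigma_\a^j F\barstar\Delta_\b\Sigma_\b^j G$, the theorem rewrites these two terms as the partial sums $\sum_{j=0}^{k}\Phi_j$ and $\sum_{j=0}^{k-1}\Phi_j$, whose difference collapses to the single top term $\Phi_k$. This is exactly the asserted formula for $k\geq 0$.

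Second, to reach all $k\in\ZZ$ I would exploit the invertibility of the monodromy operator. Setting $\Psi_k=\Delta_\g\Sigma_\g^k(F\odot G)$, the definition gives the shift relation $\Psi_{k+1}=\Sigma_\g\Psi_k$, so that $\Psi_k=\Sigma_\g^k\Psi_0$; it therefore suffices to know that the right-hand sides obey the same shift, which amounts to extending Theorem~\ref{thm:Hadamard_integrable_iterated_monodromy} to the two-sided difference form
$$
\big(\Sigma_\g^{N}-\Sigma_\g^{M}\big)(F\odot G)=\sum_{k=M}^{N-1}\Phi_k,\qquad M\leq N,\ M,N\in\ZZ .
$$
Taking $M=k$, $N=k+1$ then yields $\Psi_k=\Phi_k$ for every integer $k$. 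I expect this to be the only genuinely delicate point: one must check that continuing along the negatively oriented loop reproduces the convolution $\barstar$ with the correct sign, and that the integrability hypothesis still forces the small-loop integrals $\int_{C_\eps}$ to vanish, so that the continued branches keep isolated singularities inside $(\a\b)$.

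Finally, to state the morphism I would fix the two algebraic structures explicitly. The source is the Hadamard algebra of holomorphic germs at $0$ under $\odot$; the target is the integral convolution algebra whose elements are the families of local monodromy germs indexed by the singular set, with product given by $\barstar$ combined with the sum over all factorizations $\a\b=\g$ of a prescribed singularity $\g$. For each fixed $k$ the operator $\Delta_\g\Sigma_\g^k$ is $\CC$-linear, hence respects the additive structure, and the identity just proved,
$$
\Delta_\g\Sigma_\g^k(F\odot G)=\sum_{\a\b=\g}\Delta_\a\Sigma_\a^k F\barstar\Delta_\b\Sigma_\b^k G ,
$$
is precisely the assertion that it sends the Hadamard product to this convolution product. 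What remains is to verify that the target multiplication is associative and that these operators assemble compatibly as $k$ ranges over $\ZZ$; this follows from the associativity of $\barstar$ on germs and from the multiplicative indexing of the monodromy spaces by the singular points, and is routine once the two algebras have been fixed.
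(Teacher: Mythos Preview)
Your proposal is correct and follows essentially the same route as the paper: the paper derives the Proposition in one line from Theorem~\ref{thm:Hadamard_integrable_iterated_monodromy} via the formal identity $\Delta\Sigma^k=(\Sigma^{k+1}-\Id)-(\Sigma^k-\Id)$, which is exactly your telescoping of $\sum_{j=0}^{k}\Phi_j-\sum_{j=0}^{k-1}\Phi_j=\Phi_k$. Your additional discussion of the extension to negative $k$ and of the precise meaning of the morphism statement goes beyond what the paper spells out (it simply asserts the $k\in\ZZ$ indexing and the morphism property without further argument), so on those points you are supplying detail the paper omits rather than diverging from it.
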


As Corollary, by induction, we get the formula for the iterated monodromy for a Hadamard product with several factors:

\begin{corollary} We have
\begin{equation}\label{eq:multiple_general_monodromy_convolution}
(\Sigma^N_{\g}-\Id) (F_1\odot F_2 \odot\ldots \odot F_n)= \sum_{k=0}^{N-1}\sum_{\a_1\ldots \a_n=\g}  
\Delta_{\a_1} \Sigma_{\a_1}^k F_1 \barstar \ldots \barstar \Delta_{\a_1} \Sigma_{\a_n}^k F_n
\end{equation}
\end{corollary}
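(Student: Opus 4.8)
The plan is to reduce the $n$-factor identity to the two-factor case by induction on $n$, using the morphism property recorded in the Proposition. The structural fact driving the argument is that, for each fixed $k$, the operator $\Delta_\g\Sigma_\g^k$ carries the Hadamard product $\odot$ to the integral convolution $\barstar$, in the graded sense that the $\g$-component of the image is summed over all factorizations $\g=\a\b$. Read this way, $\Delta_\g\Sigma_\g^k$ is a homomorphism of the underlying graded convolution algebras, and the passage to several factors is then the routine iteration of a homomorphism over an $n$-fold product. Accordingly I would fix $N\geq 1$ and first observe that it is enough to prove, for every $k$ and every $\g$, the single-operator identity
$$
\Delta_\g\Sigma_\g^k (F_1\odot\cdots\odot F_n)=\sum_{\a_1\cdots\a_n=\g}\Delta_{\a_1}\Sigma_{\a_1}^k F_1\barstar\cdots\barstar\Delta_{\a_n}\Sigma_{\a_n}^k F_n \ ,
$$
since summing over $k=0,\ldots,N-1$ and using the telescoping identity $\Sigma_\g^N-\Id=\sum_{k=0}^{N-1}\Delta_\g\Sigma_\g^k$ already noted above immediately yields (\ref{eq:multiple_general_monodromy_convolution}).

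The base case $n=2$ of the displayed identity is exactly the Proposition. For the inductive step I would write $F_1\odot\cdots\odot F_n=H\odot F_n$ with $H=F_1\odot\cdots\odot F_{n-1}$, and first verify that $H$ again has isolated integrable singularities contained in the product set $(\a_1\cdots\a_{n-1})$. This is the invariance of the class of germs with isolated integrable singularities under $\odot$, supplied by the Theorem: the singularities of $H$ lie in the product set and are isolated, and integrability is preserved by the convolution $\barstar$. Granting this, the Proposition applies to the pair $(H,F_n)$ and gives
$$
\Delta_\g\Sigma_\g^k(H\odot F_n)=\sum_{\d\a_n=\g}\Delta_\d\Sigma_\d^k H\barstar\Delta_{\a_n}\Sigma_{\a_n}^k F_n \ ,
$$
while the inductive hypothesis expands $\Delta_\d\Sigma_\d^k H$ as the sum over $\a_1\cdots\a_{n-1}=\d$ of the $(n-1)$-fold convolutions. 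Substituting, and using bilinearity and associativity of $\barstar$, the nested constraints $\d\a_n=\g$ and $\a_1\cdots\a_{n-1}=\d$ collapse into the single constraint $\a_1\cdots\a_n=\g$, which produces the claimed $n$-factor identity for $\Delta_\g\Sigma_\g^k$ and closes the induction.

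I expect the genuine obstacle to be not the combinatorial bookkeeping of the factorization sums but the analytic justification that $\barstar$ is honestly associative on the relevant monodromy spaces, with mutually compatible integration paths running from each $\a_i$ to the appropriate upper limits, and that the intermediate products $H$ remain inside the integrable class at every stage so that the Proposition can be reapplied. These are precisely the points condensed in the phrase \emph{in the associated monodromy spaces} in the Proposition. Once associativity of $\barstar$ and the class-invariance under $\odot$ are secured, the induction on $n$ and the telescoping sum in $k$ are purely formal, and the corollary follows.
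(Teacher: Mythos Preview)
Your proposal is correct and follows essentially the same route the paper indicates: the paper simply says ``by induction'' from the Proposition asserting that each $\Delta_\g\Sigma_\g^k$ is a morphism from the Hadamard algebra to the convolution algebra, and you have spelled out exactly that induction together with the telescoping $\Sigma_\g^N-\Id=\sum_{k=0}^{N-1}\Delta_\g\Sigma_\g^k$. Your closing remarks about associativity of $\barstar$ and stability of the integrable class are apt---these are precisely the analytic points the paper absorbs into the phrase ``in the associated monodromy spaces'' without further justification.
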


\section{Proof of the iterated monodromy formula.}

\subsection{Geometric proof.}
The proof follows the same lines than the proof given in \cite{PM2020} by starting with Plancherel-Hadamard convolution formula
\begin{equation} \label{convolution}
F\odot G (z)=\frac{1}{2\pi i} \int_\eta F(u)G(z/u)\, \frac{du}{u}
\end{equation}
where $\eta$ is a positively oriented circle centered at $0$ of radius $r>0$ with $|z|/R_G <r< R_F$, 
where $R_F$ and $R_G$ are the respective radii of convergence of $F$ and $G$.

Hadamard Multiplication Theorem is derived from the convolution formula. The singularities of $F\odot G$ are located at points $\g=\a\b$ 
where $\a$ and $\b$ are singularities of $F$ and $G$ respectively. Also from the covolution formula we get 
that the singularities $\g$ are integrable.

 We can consider the case with trivial multiplicity $1$, i.e. when there is only one pair of singularities $(\a,\b)$ that 
satisfies $\g=\a\b$. The general case is a superposition of this case. We can also assume $\a=\b=\g=1$ to simplify the notation. 
We follow the analytic continuation of $F\odot G$ by moving the point $z$ and deforming the path of integration homotopically into $\eta_z$ 
so that the point $z$ never crosses it.

\begin{figure}[h] \label{fig:onemonodromy1}
\centering
\resizebox{10cm}{!}{\includegraphics{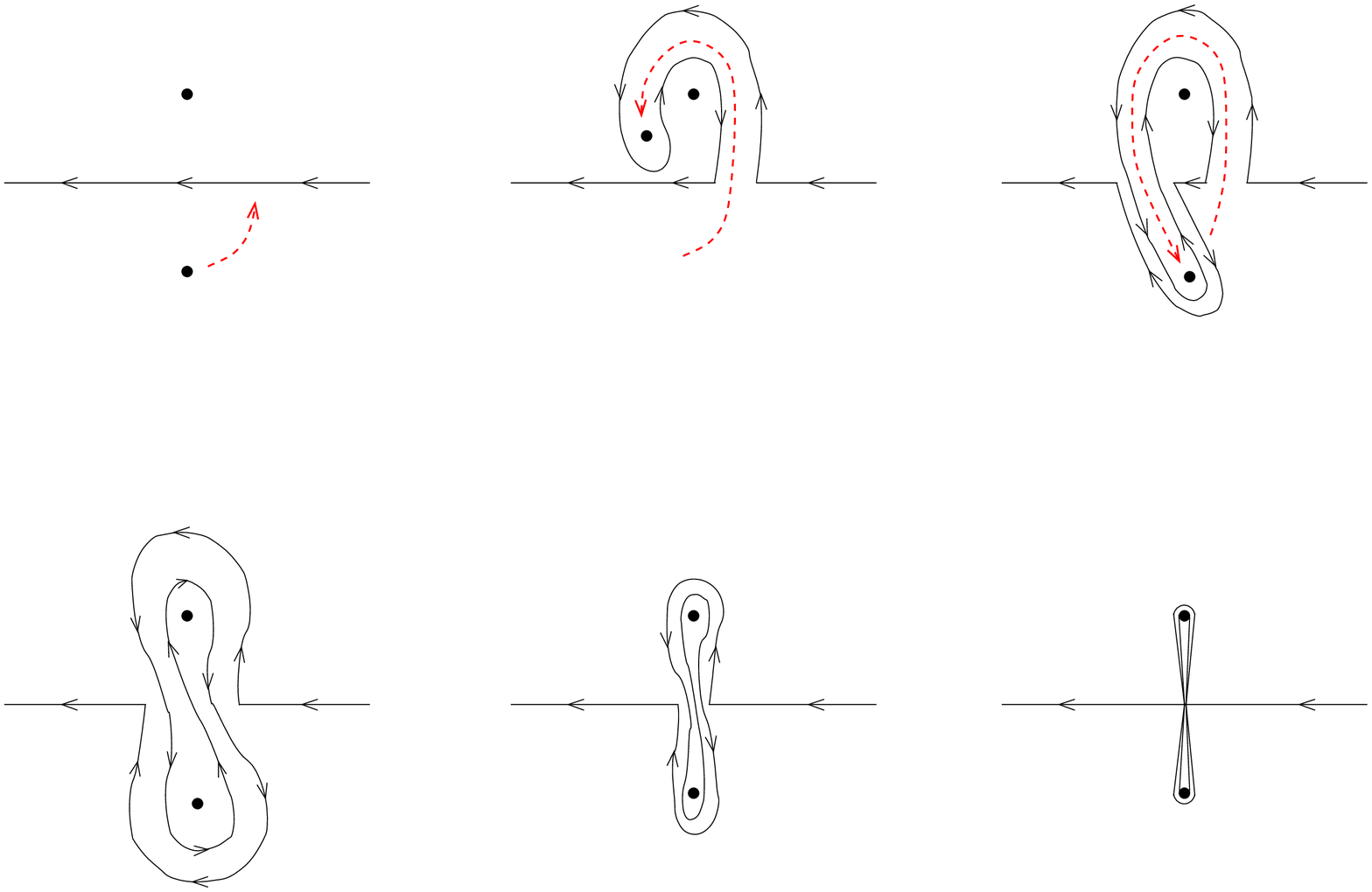}}    
\put (-280,140) {\scriptsize $\eta_{z_0}$}
\put (-248,170) {\scriptsize $1$}
\put (-248,120) {\scriptsize $z_0$}
\put (-158,140) {\scriptsize $\eta_z$}
\put (-142,157) {\scriptsize $1$}
\put (-152,151) {\scriptsize $z$}
\put (-70,138) {\scriptsize $\hat \eta_{z_0}$}
\put (-40,157) {\scriptsize $1$}
\put (-28,124) {\scriptsize $z_0$}
\put (-247,49) {\scriptsize $1$}
\put (-246,11) {\scriptsize $z_0$}
\put (-142,66) {\scriptsize $1$}
\put (-142,6) {\scriptsize $z_0$}
\put (-40,62) {\scriptsize $1$}
\put (-40,10) {\scriptsize $z_0$}
\put (-46,41) {\scriptsize $a$}
\caption{\scriptsize Homotopical deformation of the integration path when $z_0$ turns once around $1$.} 
\end{figure}

We consider the situation where $z$ starts at $z=z_0$ and turns positively $N\geq 1$ times around the point $z=1$. Figure 2 shows the 
local homotopy deformation from the integration path $\eta_{z_0}$ to $\hat \eta_{z_0}$ near $1$  for $N=1$. 
Figure 1 shows the resulting deformed path for $N=2$. Using the convolution formula
we have
$$
(\Sigma_1^N -\Id) (F\odot G) (z_0)=\frac{1}{2\pi i} \int_{\hat \eta_{z_0}-\eta_{z_0}} F(u)G(z/u)\, \frac{du}{u}
$$
Thus we are led to compute the last integral in the difference of the two paths that is the multi-loop path shown in Figure 1. As illustrated in Figure 2, 
we can deform into a train track composed by $4N$ segments copies of $[z_0,a]$ and $[a,1]$ where $a\in \CC$ is an arbitrary point in between $z_0$ and $1$. 

The integral is independent of the collapsing point $a$ and we can make $a\to 1$, leaving a train track composed by $2N$ segments copies of 
$[z_0,1]$. There are no polar contributions at $z=z_0$ or $z=1$ since we assume that the singularities are integrable. 

Considering the $2N$ paths, in the positive ordering given by the orientation of $\hat \eta_{z_0}$, and the corresponding signs given by the orientation, 
we have to integrate on $[1,z_0]$ the maps 
$$
F_1G_0, -F_1 G_1, F_2G_1, -F_2 G_2,\ldots, F_NG_{N-1}, -F_NG_N, F_{N-1}G_N, -F_{N-1}G_{N-1},\ldots ,F_0 G_1, -F_0 G_0 
$$
for a total contribution of
\begin{align*}
&F_1 (G_0-G_1)+F_2 (G_1-G_2)+\ldots +F_N(G_{N-1}-G_N) + F_{N-1}(G_N-G_{N-1})+\ldots  +F_0 (G_1-G_0) \\
&=- F_1 \Delta_1 G_0-F_2 \Delta_1 G_1+\ldots -F_N\Delta_1 G_{N-1} + F_{N-1}\Delta_1 G_{N-1} +\ldots +F_0 \Delta_1 G_1 \\
&=- \Delta_1 F_0 \Delta_1 G_0 - \Delta_1 F_1 \Delta_1 G_1  -\ldots - \Delta_1 F_{N-1} \Delta_1 G_{N-1}
\end{align*}
and finally 
$$
(\Sigma_1^N -\Id) (F\odot G) (z_0)=-\frac{1}{2\pi i} \int_1^{z_0} \sum_{k=0}^{N-1} \Delta_1 F_k(u).\Delta_1 G_k(z/u)\, \frac{du}{u}
$$
which is  the main formula for $\a=\b=1$.$\diamond$

\section{Properties of monodromy operators and another derivation.}

We develop some general properties of the monodromy operators that provide a 
second derivation of the iterated monodromy formula from the case $N=1$. This section is only used 
for this purpose and can be skipped in a first reading. The formulas developed here are useful for 
applications (for example we get a straightforward computation of the monodromy of polylogarithms).

\subsection{Properties of monodromy operators for integrable singularities.} 

\begin{proposition}[Fundamental integro-monodromy formula]\label{prop_fundamental}
Let $f$ be a holomorphic germ with an integrable isolated singularity at $z=\a$. 
The local germ with an isolated singularity at $\a$
$$
F(z)=\int_\a^z f(u) \, du 
$$
is well defined. We have 
$$
\Delta_\a F(z)=\int_\a^z \Delta_\a f(u) \, du 
$$
or
$$
\Delta_\a \left (\int_\a^z f(u) \, du \right )=\int_\a^z \Delta_\a f(u) \, du 
$$
In general, for $z_0\in \CC$ which is path connected to $\a$ in a domain where $f$ is holomorphic, we have
$$
\Delta_{\a} \left (\int_{z_0}^z f(u) \, du \right )=\int_\a^z \Delta_\a f(u) \, du 
$$
\end{proposition}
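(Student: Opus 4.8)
The plan is to prove the Fundamental integro-monodromy formula by exploiting the geometric/analytic description of the monodromy operator $\Delta_\a$ as the difference between a branch and its analytic continuation around $\a$, combined with the integrability hypothesis which controls the behavior of the integral near the singular point. The key observation is that the monodromy operator $\Delta_\a$ should commute with integration along a path, because analytic continuation of an integral can be computed by continuing the integrand along the deformed path.

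\medskip

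\noindent\textbf{Setup and first step.} First I would fix a base point $z$ near $\a$ (but not equal to $\a$) and consider the function $F(z)=\int_\a^z f(u)\,du$. The integrability hypothesis $\Slim_{z\to\a}(z-\a)f(z)\to 0$ is exactly what guarantees that this improper integral converges and defines a holomorphic germ at $\a$: the integral over a small loop $C_\eps$ around $\a$ tends to $0$ as $\eps\to 0$, so there is no residual (logarithmic) contribution that would obstruct the definition. I would make this precise by showing that $\int_\a^z f(u)\,du$ is independent of the path chosen in a slit neighborhood of $\a$ and extends continuously as $z\to\a$.

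\medskip

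\noindent\textbf{Main step: commuting $\Delta_\a$ with $\int$.} The heart of the argument is to compute $\Delta_\a F(z)=F_1(z)-F(z)$, where $F_1$ is the analytic continuation of $F$ as $z$ travels along a small positively-oriented loop of winding number $1$ about $\a$. Analytically continuing $F(z)=\int_\a^z f(u)\,du$ along such a loop amounts to continuing the integrand $f$ simultaneously; tracking the lower endpoint back to $\a$, the continued branch is
$$
F_1(z)=\int_\a^z f_1(u)\,du,
$$
where $f_1=\Sigma_\a f$ is the once-continued branch of $f$. Subtracting, and using that the integration path can be taken identically for both branches (the difference lives only in the integrand), I obtain
$$
\Delta_\a F(z)=\int_\a^z \big(f_1(u)-f(u)\big)\,du=\int_\a^z \Delta_\a f(u)\,du,
$$
which is the claimed identity. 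The subtle point here is justifying that the lower limit of integration genuinely returns to $\a$ rather than to a shifted point on a loop around $\a$; this is exactly where integrability is used again, to ensure the endpoint contribution at $\a$ vanishes and the two integrals share the same basepoint.

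\medskip

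\noindent\textbf{General basepoint and the main obstacle.} For the final statement with an arbitrary basepoint $z_0$ path-connected to $\a$ in a holomorphic domain of $f$, I would write $\int_{z_0}^z f=\int_{z_0}^\a f+\int_\a^z f$. The first piece $\int_{z_0}^\a f$ is a constant with respect to the monodromy loop at $\a$ (the path from $z_0$ to $\a$ can be taken to avoid the loop region, so it is unaffected by continuation around $\a$), hence is annihilated by $\Delta_\a$; applying the previous step to the second piece yields $\Delta_\a\big(\int_{z_0}^z f\big)=\int_\a^z\Delta_\a f(u)\,du$. I expect the main obstacle to be the careful bookkeeping of paths: one must verify that the homotopy deformation of the integration contour as $z$ circles $\a$ does not introduce spurious contributions from the moving endpoint, and that the improper integral's convergence at $\a$ is uniform enough to commute the limit $\eps\to 0$ with the difference of branches. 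This is precisely the place where the integrability hypothesis does the real work, eliminating the residual term that would otherwise appear (as it does in the holomorphic, non-integrable case treated in \cite{PM2020}).
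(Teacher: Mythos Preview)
Your proposal is correct and follows essentially the same approach as the paper: reduce the general basepoint case to $z_0=\a$ by splitting off the constant $\int_{z_0}^\a f$, then compute $\Delta_\a F$ by tracking how the integration path deforms as $z$ winds once around $\a$, using integrability to kill the contribution at the shrinking endpoint near $\a$. The only cosmetic difference is that the paper makes the key step more geometrically explicit---it first writes $F_1(z)=\int_{[\a,z]\cup\eta(z)}f(u)\,du$ so that $\Delta_\a F(z)=\int_{\eta(z)}f(u)\,du$, and then collapses the circle $\eta(z)$ onto the doubled segment $[\a,z]$ to obtain $\int_\a^z(f_1-f)\,du$---whereas you assert $F_1(z)=\int_\a^z f_1(u)\,du$ directly; these are the same computation, and you correctly flag the collapse/endpoint issue as the place where integrability enters.
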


\begin{proof}
The general case for $z_0\in \CC$ follows from the case $z_0=\a$ since 
$$
\int_{z_0}^z f(u) \, du = \int_{z_0}^\a f(u) \, du +\int_{\a}^z f(u) \, du
$$
where the integral from $z_0$ to $\a$ is taken on a path where $f$ is holomorphic, and 
$$
\int_{z_0}^\a f(u) \, du
$$
is constant on $z$.

Now, we can integrate along the straight segment $[\a, z]$,
$$
F(z)=\int_{[\a, z]} f(u) \, du 
$$
then 
$$
F_1(z) =\int_{[\a, z]\cup \eta_z} f(u) \, du
$$
where $\eta (z)$ is the circle centered at $\a$ passing through $z$ with the positive orientation. Then we have
$$
\Delta_\a F(z)=\int_{\eta(z)} f(u) \, du  \ .
$$
Now, we can collapse homotopically $\eta(z)$ into the segment $[\a, z]$, which gives a double integral along this segment,
$$
\Delta_\a F(z)=\int_\a^z (f_1(u)-f(u)) \, du=\int_\a^z \Delta_\a f(u) \, du   \ .
$$
\end{proof}

\textbf{Example.}
We recall that the polylogarithms can be defined as $\Li_1(z)=-\log(1-z)$ and
$$
\Li_{k+1}(z)=\int_0^z \Li_k(u)\, \frac{du}{u} \ . 
$$
From the properties of the logarithmic function, we have 
$$
\Delta_1 \Li_1(z) = -2\pi i
$$
and by induction using the Proposition \ref{prop_fundamental} we prove
$$
\Delta_1 \Li_{k} (z) = -2\pi i \frac{(\log z)^{k-1}}{(k-1)!} 
$$
The one line proof by induction is the computation
$$
\Delta_1 \Li_{k+1} (z) = \int_1^z \frac{-2\pi i}{(k-1)!} \, \frac{(\log u)^{k-1}}{u} \, du =-\frac{2\pi i}{k!} \left [(\log u)^k\right ]_1^z =-\frac{2\pi i}{k!} (\log z)^k 
$$
which gives the result.$\diamond$

\medskip

We extend Proposition \ref{prop_fundamental}.

\begin{proposition}\label{prop_integro_monodromy_2_var}
Let $u\mapsto f(u,z)$ be a holomorphic germ with an integrable isolated singularity at $u=\a$, depending holomorphically on $z$ in a 
neighborhood $U$ of $\a$ with $(u,z)\mapsto f(u,z)$ holomorphic for $(u,z)$ in a neighborhood of 
$\bigcup_{z\in U} \bar B(\a,z)\times \{z\}-\{(z,z)\}$. We assume that $u\mapsto f(u,z)$ is uniformly integrable at $u=z$ (i.e. along paths landing at $u=z$).
The local germ with an isolated singularity at $\a$
$$
F(z)=\int_\a^z f(u, z) \, du 
$$
is well defined. We have 
$$
\Delta_\a F(z)=\int_\a^z \Delta_{[\eta(z)]} f(u,z) \, du 
$$
where $\eta (z)$ is the loop starting at $z$ whose support is the circle centered at $\a$ passing through $z$ with the positive orientation.
\end{proposition}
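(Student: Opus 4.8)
The plan is to run the geometric argument of Proposition~\ref{prop_fundamental} essentially verbatim, the single new feature being that the integrand now carries its own dependence on the moving upper endpoint $z$, and in particular has a second (travelling) singularity at $u=z$. Tracking that second singularity is what forces us to record the precise loop $\eta(z)$ along which the first variable is continued, rather than an unspecified loop around $\alpha$.

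First I would realize $F$ as an integral along the straight segment, $F(z)=\int_{[\alpha,z]}f(u,z)\,du$; this is legitimate because for $u\in[\alpha,z]\subset\bar B(\alpha,z)$ the integrand is holomorphic away from the endpoint $u=z$, where it is integrable by hypothesis. Then, exactly as in Proposition~\ref{prop_fundamental}, I would drag $z$ once positively around $\alpha$ and follow the analytic continuation. Two things move simultaneously: the upper endpoint of the path, and the parameter $z$ occurring inside the integrand (together with the integrand's own singularity at $u=z$, which travels along the circle $\eta$). The path deforms, keeping its endpoint attached to $z$ and never crossing $\alpha$, into $[\alpha,z]\cup\eta(z)$. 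The key observation is that the parameter returns \emph{single-valued}, since $z\mapsto f(\cdot,z)$ is holomorphic on the neighborhood $U$ of $\alpha$; hence at the end of the continuation the integrand is again $f(\cdot,z)$ in the same branch, and $\Delta_\alpha F(z)=\int_{\eta(z)}f(u,z)\,du$. Finally I would collapse $\eta(z)$ onto the segment $[\alpha,z]$, producing the difference between the branch of $f(\cdot,z)$ obtained by continuation along $\eta(z)$ and $f(\cdot,z)$ itself, i.e. exactly $\Delta_{[\eta(z)]}f(u,z)$, integrated over $[\alpha,z]$, which is the asserted formula.

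The step requiring care — and the reason the hypotheses are phrased as they are — is the collapse near the moving singularity $u=z$. Since $\eta(z)$ is based precisely at the singular point $z$, I must show that the two boundary arcs can be pushed onto $[\alpha,z]$ with vanishing contribution near $u=z$; this is where uniform integrability at $u=z$ along landing paths is used, in the same role that integrability at $\alpha$ played when discarding polar contributions in the proof of formula~(\ref{eq:general_monodromy_convolution}). It is also what forces the loop to be recorded as $\eta(z)$: because $f(\cdot,z)$ now carries the second singularity at $u=z$, the monodromy ``around $\alpha$'' depends on the homotopy class of the loop in $\CC\setminus\{\alpha,z\}$, and $\eta(z)$ is precisely the class produced by the deformation (a circle around $\alpha$ not enclosing $z$).

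To make the endpoint bookkeeping fully rigorous, rather than arguing informally with a moving contour, I would normalize by the substitution $u=\alpha+(z-\alpha)s$, writing $F(z)=(z-\alpha)\int_0^1 f(\alpha+(z-\alpha)s,z)\,ds$ over the fixed segment $[0,1]$. Continuing $z$ once around $\alpha$ then leaves the second argument single-valued, while the first argument $\alpha+(z-\alpha)s$ winds once around $\alpha$ along the concentric circle of radius $|z-\alpha|\,s$, which meets the travelling singularity $u=z$ only at $s=1$; for $s\in[0,1)$ this is a clean loop about $\alpha$ homotopic to $\eta(z)$ in $\CC\setminus\{\alpha,z\}$. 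This yields $\Delta_\alpha F(z)=(z-\alpha)\int_0^1\Delta_{[\eta(z)]}f(\alpha+(z-\alpha)s,z)\,ds$, and reverting the substitution gives $\Delta_\alpha F(z)=\int_\alpha^z\Delta_{[\eta(z)]}f(u,z)\,du$, the passage to the limit $s\to1$ being justified by uniform integrability. I expect this endpoint limit to be the only genuine obstacle; everything else is a faithful repetition of Proposition~\ref{prop_fundamental}.
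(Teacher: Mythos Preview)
Your argument is correct and is essentially the paper's own proof: the paper simply says that the same reasoning as in Proposition~\ref{prop_fundamental} applies, integrating along $\eta(z)$ and then collapsing homotopically onto the segment $[\alpha,z]$. You have added a more careful discussion of the role of the travelling singularity at $u=z$ and a normalizing substitution, but the core approach is identical.
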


Note that  $f(u,z)$ may have an integrable singularity at $u=z$, as it will be the case in our main application of this formula.
The proof follows from the same argument as for Proposition \ref{prop_fundamental} since we can integrate along $\eta(z)$, and 
then we collapse homotopically $\eta(z)$ onto the segment $[\a, z]$.

\medskip

We recall the notation for the integral convolution
$$
f\, \bar \star\,  g =- \frac{1}{2\pi i}  \int_\g^{z}  f (u/\b) \, . \, g (\b z/u) \, \frac{du}{u} \  .
$$

\begin{corollary}
Consider $f$, resp. $g$,  having an isolated singularity at $\a$, resp. $\b$, and $\g =\a\b$. We have
$$
\Delta_\g (f\, \bar \star\,  g) = \Delta_\a f\, \bar \star\,  g + f\, \bar \star\, \Delta_\b  g + \Delta_\a f\, \bar \star\, \Delta_\b  g  
$$
\end{corollary}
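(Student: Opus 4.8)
The plan is to apply the two–variable integro–monodromy formula of Proposition \ref{prop_integro_monodromy_2_var} to the convolution, written in integral form
\[
(f\barstar g)(z)=\int_\g^z h(u,z)\,du,\qquad h(u,z)=-\frac{1}{2\pi i}\,f(u/\b)\,g(\b z/u)\,\frac1u .
\]
As a germ in $u$ (with $z$ a parameter), $h(\cdot,z)$ has exactly two singularities: one at $u=\g$, coming from the factor $f(u/\b)$ (since $u/\b=\a\iff u=\g$), and one at the endpoint $u=z$, coming from the factor $g(\b z/u)$ (since $\b z/u=\b\iff u=z$). Both are integrable, the first because $\a$ is an integrable singularity of $f$ and the second because $\b$ is one of $g$; this is precisely the situation for which Proposition \ref{prop_integro_monodromy_2_var} was designed, with the point $\a$ there replaced by $\g$. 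It gives
\[
\Delta_\g (f\barstar g)(z)=\int_\g^z \Delta_{[\eta(z)]} h(u,z)\,du ,
\]
where $\eta(z)$ is the positively oriented loop based at $z$ supported on the circle centred at $\g$ through $z$.

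First I would identify the variation $\Delta_{[\eta(z)]}$ of the two singular factors. The cleanest way is to freeze the integration variable by the substitution $u=\g+(z-\g)\tau$, $\tau\in[0,1]$, under which
\[
f(u/\b)=f\!\left(\a+(z-\g)\tfrac{\tau}{\b}\right),\qquad
g(\b z/u)=g\!\left(\b+\b(z-\g)\tfrac{1-\tau}{u}\right).
\]
As $z$ turns once positively around $\g$, the factor $z-\g$ turns once around $0$, so the argument of $f$ turns once around $\a$ and the argument of $g$ turns once around $\b$, each exactly once and in the positive sense; the factors $\tau$, $1-\tau$ and $u=\g+(z-\g)\tau$ (which stays near $\g$ and never encircles $0$) carry no monodromy. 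Integrability of the two endpoints and the uniform integrability hypothesis of Proposition \ref{prop_integro_monodromy_2_var} allow the continuation to be performed under the integral sign. Hence the continuation sends $f\mapsto\Sigma_\a f$ and $g\mapsto\Sigma_\b g$ simultaneously, i.e.
\[
\Delta_{[\eta(z)]}\big(f(u/\b)\,g(\b z/u)\big)=\Sigma_\a f(u/\b)\,\Sigma_\b g(\b z/u)-f(u/\b)\,g(\b z/u),
\]
which is equivalent to the compact identity $\Sigma_\g(f\barstar g)=\Sigma_\a f\barstar\Sigma_\b g$.

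It then remains only to expand. Since $\barstar$ is bilinear and $\Sigma_\a=\Id+\Delta_\a$, $\Sigma_\b=\Id+\Delta_\b$,
\[
\Sigma_\a f\barstar\Sigma_\b g-f\barstar g
=\Delta_\a f\barstar g+f\barstar\Delta_\b g+\Delta_\a f\barstar\Delta_\b g,
\]
and substituting $\Delta_\g(f\barstar g)=\Sigma_\g(f\barstar g)-f\barstar g=\Sigma_\a f\barstar\Sigma_\b g-f\barstar g$ yields the claimed formula.

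I expect the only real obstacle to be the rigorous justification of the \emph{simultaneous} full monodromy of both factors, and in particular of the factor $g(\b z/u)$ whose singularity $u=z$ sits at the endpoint of the integration segment, that is, on the loop $\eta(z)$ itself. A naive computation that fixes $z$ and continues $u$ once around $\eta(z)$ rotates the argument $\b z/u$ only half–way around $\b$; the missing half, and hence the full operator $\Sigma_\b$, appears only once the dependence of the integrand on the parameter $z$ (and on the moving endpoint) is taken into account together, which is exactly what the substitution $u=\g+(z-\g)\tau$ makes transparent and what the operator $\Delta_{[\eta(z)]}$ of Proposition \ref{prop_integro_monodromy_2_var} encodes. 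Checking that the turn–arounds at $u=\g$ and $u=z$ contribute no polar terms, so that only the change of branch survives, is precisely where integrability of the two singularities is used.
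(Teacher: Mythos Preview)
Your proof is correct and follows the same strategy as the paper: apply Proposition~\ref{prop_integro_monodromy_2_var}, identify that the continuation sends $f\mapsto\Sigma_\a f$ and $g\mapsto\Sigma_\b g$ simultaneously, and expand using $\Sigma=\Id+\Delta$ and bilinearity of $\barstar$. The paper's proof is a terse version of this: it simply asserts that ``when the variable $u$ goes along $\eta(z)$, then $u/\b$ winds once around $\a$, and $\b z/u$ winds once around $\b$,'' and then writes the same expansion.

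Your substitution $u=\g+(z-\g)\tau$ and the accompanying discussion of the endpoint singularity at $u=z$ are genuinely more careful than what the paper offers. As you correctly notice, if one freezes $z$ and sends $u$ once around the circle $\eta(z)$, the factor $\b z/u-\b=\b(z-\g)(1-e^{i\theta})/u$ only picks up half the argument, so the full $\Sigma_\b$ on $g$ cannot be seen from a pure $u$--monodromy with $z$ fixed. Your reparametrisation makes transparent that it is the motion of $z$ around $\g$ (with the integration segment carried along) that produces a full turn in each argument; this is what the operator $\Delta_{[\eta(z)]}$ of Proposition~\ref{prop_integro_monodromy_2_var} is implicitly encoding. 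So your version is the same proof with the geometric assertion justified rather than stated.
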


\begin{proof}
We only need to compute the monodromy along $\eta(z)$ and apply Proposition  \ref{prop_integro_monodromy_2_var}. Observe that when the variable $u$ goes along 
$\eta(z)$, then $u/\b$ winds once around $\a$, and $\beta z/u$ winds once around $\b$, hence 
$$
\Sigma_{[\eta(z)]} f (u/\b) \, . \, g (\b z/u) = f_1 (u/\b) \, . \, g_1 (\b z/u) 
$$
thus 
$$
\Delta_{[\eta(z)]} f (u/\b) \, . \, g (\b z/u) = \Delta_\a f (u/\b) \, . \, g (\b z/u) +  f (u/\b) \, . \, \Delta_\b g (\b z/u) + \Delta_\a f (u/\b) \, . \, \Delta_\b g (\b z/u)
$$
and the result follows.
\end{proof}

\begin{corollary}\label{cor_computation}
We have
$$
\Delta_\g \big ( \Delta_\a F \, \bar \star \, \Delta_\b G \big ) = \Delta_\a \Sigma_\a F \, \bar \star\,  \Delta_\b \Sigma_\b G  - \Delta_\a F \, \bar \star \, \Delta_\b G 
$$
\end{corollary}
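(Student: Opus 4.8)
The plan is to deduce this directly from the preceding Corollary together with the elementary operator identity relating $\Delta$ and $\Sigma$. First I would apply that Corollary to the pair $f=\Delta_\a F$ and $g=\Delta_\b G$, which have isolated (integrable) singularities at $\a$ and $\b$ respectively, so the hypotheses are met with the same product $\g=\a\b$. This gives
$$
\Delta_\g\big(\Delta_\a F \, \bar \star\, \Delta_\b G\big) = \Delta_\a\big(\Delta_\a F\big)\, \bar \star\,\Delta_\b G + \Delta_\a F\, \bar \star\,\Delta_\b\big(\Delta_\b G\big) + \Delta_\a\big(\Delta_\a F\big)\, \bar \star\,\Delta_\b\big(\Delta_\b G\big) \ .
$$

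The key step is to rewrite the iterated monodromies $\Delta_\a(\Delta_\a F)$ and $\Delta_\b(\Delta_\b G)$. Since $\Sigma_\a=\Id+\Delta_\a$, the operators $\Delta_\a$ and $\Sigma_\a$ are polynomials in $\Sigma_\a$ and hence commute, and one has the formal identity $\Delta_\a^2=(\Sigma_\a-\Id)^2=\Sigma_\a^2-2\Sigma_\a+\Id=(\Sigma_\a^2-\Sigma_\a)-(\Sigma_\a-\Id)=\Delta_\a\Sigma_\a-\Delta_\a$. Therefore $\Delta_\a(\Delta_\a F)=\Delta_\a\Sigma_\a F-\Delta_\a F$, and likewise $\Delta_\b(\Delta_\b G)=\Delta_\b\Sigma_\b G-\Delta_\b G$.

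Next I would substitute these expressions and use the bilinearity of the integral convolution $\bar \star$, which is immediate from its definition as an integral. Writing $a=\Delta_\a F$, $a'=\Delta_\a\Sigma_\a F$, $b=\Delta_\b G$, $b'=\Delta_\b\Sigma_\b G$, the three terms become $(a'-a)\, \bar \star\, b + a\, \bar \star\,(b'-b) + (a'-a)\, \bar \star\,(b'-b)$, and expanding by bilinearity the cross terms telescope: the coefficient of $a'\, \bar \star\, b$ is $1-1=0$, that of $a\, \bar \star\, b'$ is $1-1=0$, that of $a\, \bar \star\, b$ is $-1-1+1=-1$, and that of $a'\, \bar \star\, b'$ is $1$. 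What survives is exactly $\Delta_\a\Sigma_\a F\, \bar \star\,\Delta_\b\Sigma_\b G-\Delta_\a F\, \bar \star\,\Delta_\b G$, which is the asserted formula.

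The argument is entirely formal once the preceding Corollary is in hand, so there is no genuine analytic obstacle; the only points deserving care are the bookkeeping of the telescoping sum above and the verification that $\Delta_\a F$ and $\Delta_\b G$ indeed carry their isolated singularities at $\a$ and $\b$, so that the preceding Corollary applies verbatim to the pair $(\Delta_\a F,\Delta_\b G)$.
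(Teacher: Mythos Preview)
Your proof is correct and follows essentially the same route as the paper: both obtain the three-term expansion $\Delta_\a^2 F\,\bar\star\,\Delta_\b G+\Delta_\a F\,\bar\star\,\Delta_\b^2 G+\Delta_\a^2 F\,\bar\star\,\Delta_\b^2 G$ (you via the preceding Corollary, the paper by citing Proposition~\ref{prop_integro_monodromy_2_var} directly, which amounts to the same thing), and then simplify using $\Delta^2=\Delta\Sigma-\Delta$. Your explicit telescoping with the abbreviations $a,a',b,b'$ just makes visible the algebra the paper compresses into the phrase ``recalling that $\Sigma=\Id+\Delta$''.
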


\begin{proof}
We write 
$$
\Delta_\a F \, \bar \star \, \Delta_\b G=- \frac{1}{2\pi i}  \int_\g^{z}  \Delta_\a F (u/\b) \, . \, \Delta_\b G (\b z/u) \, \frac{du}{u}
$$
and we use Proposition \ref{prop_integro_monodromy_2_var} applied to the function $f(u,z)=\Delta_\a F(u) \Delta_\b G(z/u)$, and the singulatity at $\g$,
\begin{align*}
\Delta_\g \big ( \Delta_\a F \, \bar \star \, \Delta_\b G \big ) &= \Delta_\a^2 F \, \bar \star \,  \Delta_\b  G + \Delta_\a F \, \bar \star \,  \Delta_\b^2  G 
+\Delta_\a^2 F \,  \bar \star \, \Delta_\b^2  G\\
&= \Delta_\a \Sigma_\a F \, \bar \star \,  \Delta_\b \Sigma_\b G  - \Delta_\a F \, \bar \star \, \Delta_\b G 
\end{align*}
recalling that $\Sigma = \Id +\Delta$.
\end{proof}

\subsection{Derivation from the case $N=1$.}

We assume Theorem \ref{thm:Hadamard_integrable_iterated_monodromy} for $N=1$ and we argue by induction. 
To simplify we assume that we don't have higher multiplicity
for the singularity $\g=\a\b$, i.e. that there is only one pair $(\a, \b)$ such that $\g=\a\b$. The general case 
is obtained by linear superposition.
We write 
\begin{align*}
\Sigma_\g^{N+1} (F\odot G) &= \Sigma_\g^{N} (F\odot G) + \Delta_\g \Sigma_\g^{N} (F\odot G) \\
&= \Sigma_\g^{N} (F\odot G) + \Delta_\g (F\odot G) + \sum_{k=0}^{N-1} \Delta_\g \big ( \Delta_\a \Sigma^k_\a F \,\bar\star\,  \Delta_\b \Sigma^k_\b G \big ) \\
&= \Sigma_\g^{N} (F\odot G) + \Delta_\g (F\odot G) + \sum_{k=0}^{N-1}  \Big (\Delta_\a \Sigma^{k+1}_\a F \,\bar\star\,  \Delta_\b \Sigma^{k+1}_\b G - 
\Delta_\a \Sigma^{k}_\a F \,\bar\star\,  \Delta_\b \Sigma^{k}_\b G  \Big )\\
&= \Sigma_\g^{N} (F\odot G) + \Delta_\g (F\odot G) + \Big (\Delta_\a \Sigma^{N}_\a F \,\bar\star\,  \Delta_\b \Sigma^{N}_\b G - \Delta_\a F\, \bar \star \,  \Delta_\b G \Big )\\
&= \Sigma_\g^{N} (F\odot G) + \Delta_\a \Sigma^{N}_\a F \,\bar\star\,  \Delta_\b \Sigma^{N}_\b G
\end{align*}
where we use the induction hypothesis on the second line, we use Corollary \ref{cor_computation} in the third line, we telescope the sum on the fourth line, and we use  
$\Delta_\g (F\odot G)= \Delta_\a F\, \bar \star \,  \Delta_\b G$  (which is the result for $N=1$) in the last line.
Now, the result follows by using the induction hypothesis in the last equation.$\diamond$

Obviously, if we knew the morphism property for $\Delta_\g \Sigma_\g^N$ we could shorten the proof using this in the first line. The morphism property is 
equivalent to the iterated monodromy formula.

\section{Applications.}

\subsection{Local monodromies of Hadamard products of algebraic functions.}

We consider holomorphic germs $H$ that are the Hadamard product of a finite number of algebraic functions. 
The Hadamard grade, defined by Allouche and Mend\`es-France in \cite{AMF}, is the minimum integer $n\geq 1$ such that 
$$
H=F_1\odot \ldots \odot F_n
$$
where $F_1, \ldots , F_n$ are algebraic functions. The grade is infinite if there is no such decomposition. The Hadamard grade 
is natural when we study the problem of generating the tower of special functions by Hadamard products \textit{\`a la Liouvile}.
We observe that for an algebraic function $F$, for any ramification singularity $\a$, we have that $\Sigma_\a^k F$, for $k\in \ZZ$, 
are the conjugates of $F$, and $(\Sigma_\a^k F)_k$ is $d$-periodic where $d$ is the degree of $F$ (the local minimal period divides $d$).

We consider in the rest of this section algebraic functions $F_1, \ldots F_n$ which have all the singularities integrable.
The main iterated monodromy formula puts a heavy restriction on the local monodromies of the singularities 
of the function $H=F_1\odot \ldots \odot F_n$.

\begin{theorem}
Let $H= F_1\odot \ldots \odot F_n$ where, for $1\leq j\leq n$, the $F_j$ are 
algebraic functions of degree $d_j\geq 2$ with integrable ramification points, then, if $d=d_1\ldots d_n$, we have
for $N\geq 1$, 
$$
(\Sigma_\g^{N+2d} -2\, \Sigma_\g^{N+d} +\Sigma_\g^{N} \big )H =0
$$
\end{theorem}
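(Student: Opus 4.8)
The plan is to combine the multiple‑factor iterated monodromy formula~(\ref{eq:multiple_general_monodromy_convolution}) with the periodicity of the branches of algebraic functions, reducing everything to an elementary statement about a periodic sequence. First I would record, for $N\geq 1$,
$$
(\Sigma_\g^N-\Id)H=\sum_{k=0}^{N-1}T_k,\qquad
T_k:=\sum_{\a_1\cdots\a_n=\g}\Delta_{\a_1}\Sigma_{\a_1}^k F_1\barstar\cdots\barstar\Delta_{\a_n}\Sigma_{\a_n}^k F_n ,
$$
so that the whole problem is transferred to understanding the sequence $(T_k)_{k\geq 0}$ of convolution terms indexed by the number of loops.

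The key structural input is that each $F_j$ is algebraic of degree $d_j$: the sequence of branches $(\Sigma_{\a_j}^k F_j)_k$ around any ramification point $\a_j$ is periodic with local period dividing $d_j$, hence so is the difference $\Delta_{\a_j}\Sigma_{\a_j}^k F_j=\Sigma_{\a_j}^{k+1}F_j-\Sigma_{\a_j}^k F_j$. Since $\barstar$ is bilinear in each argument and $d_j\mid d$ for every $j$, every summand of $T_k$ is periodic in $k$ with period dividing $d$, and therefore $T_{k+d}=T_k$ for all $k$. With this in hand I would compute the first difference across a full period,
$$
(\Sigma_\g^{N+d}-\Sigma_\g^N)H=\big[(\Sigma_\g^{N+d}-\Id)-(\Sigma_\g^N-\Id)\big]H=\sum_{k=N}^{N+d-1}T_k=\sum_{k=0}^{d-1}T_k=:W ,
$$
using the elementary fact that the sum of $d$ consecutive terms of a $d$‑periodic sequence is independent of the starting index. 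The crucial point is that $W$ does not depend on $N$.

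Applying this identity once with $N$ and once with $N+d$ and subtracting gives
$$
(\Sigma_\g^{N+2d}-2\Sigma_\g^{N+d}+\Sigma_\g^N)H=(\Sigma_\g^{N+2d}-\Sigma_\g^{N+d})H-(\Sigma_\g^{N+d}-\Sigma_\g^N)H=W-W=0 ,
$$
which is the desired vanishing. Equivalently one may observe that $\Sigma_\g^{N+2d}-2\Sigma_\g^{N+d}+\Sigma_\g^N=\Sigma_\g^N(\Sigma_\g^d-\Id)^2$, that $(\Sigma_\g^d-\Id)H=W$ by the computation above, and that $W$ is again annihilated by $\Sigma_\g^d-\Id$; in this language the statement is simply that the \emph{second} difference of a sequence whose first difference is already constant must vanish.

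I expect the only genuine subtlety to be the periodicity bookkeeping rather than any analytic difficulty. One must check that the sum over factorizations $\a_1\cdots\a_n=\g$ is finite (it is, since each algebraic $F_j$ has finitely many singularities, so only finitely many factorizations contribute to a given $\g$), and that all the factors $\Delta_{\a_j}\Sigma_{\a_j}^k F_j$ occurring in $T_k$ are \emph{simultaneously} $d$‑periodic, so that the single integer $d=d_1\cdots d_n$ serves as a common period despite the various local periods at the different ramification points of the different $F_j$. Once $d$‑periodicity of $(T_k)$ is secured from the stated periodicity of algebraic branches, the remainder is the purely formal telescoping displayed above.
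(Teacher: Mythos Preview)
Your proof is correct and follows essentially the same route as the paper: both start from the multiple-factor iterated monodromy formula, use that the branches of each algebraic $F_j$ are periodic of period dividing $d_j$ (hence the convolution terms $T_k$ are $d$-periodic), deduce that $(\Sigma_\g^{N+d}-\Sigma_\g^N)H$ equals the constant $W=(\Sigma_\g^d-\Id)H$, and then take the second difference to obtain~$0$. The paper phrases the intermediate step via the Euclidean division $N=kd+r$ to write $(\Sigma_\g^N-\Id)H=\big(k(\Sigma_\g^d-\Id)+(\Sigma_\g^r-\Id)\big)H$, but this is exactly your observation that the partial sums of a $d$-periodic sequence grow linearly with slope~$W$.
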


\begin{proof}
We start from formula (\ref{eq:multiple_general_monodromy_convolution}) 
$$
(\Sigma^N_{\g}-\Id) (F_1\odot F_2 \odot\ldots \odot F_n)= \sum_{k=0}^{N-1}\sum_{\a_1\ldots \a_n=\g}  
\Delta_{\a_1} \Sigma_{\a_1}^k F_1 \barstar \ldots \barstar \Delta_{\a_1} \Sigma_{\a_n}^k F_n
$$
which shows that if $N=kd+r$ the Euclidean division by $d$, $0\leq r<d$, we have
\begin{equation*}
(\Sigma^N_{\g}-\Id) H= \big (k \left (\Sigma_\g^d-\Id\right ) +\left (\Sigma_\g^r-\Id\right ) \big ) H 
\end{equation*}
Therefore, we get $\big (\Sigma_\g^{N+d} -\Sigma_\g^N \big ) H= \big (\Sigma_\g^d -\Id)  H$ and 
$$
(\Sigma_\g^{N+2d} -2\, \Sigma_\g^{N+d} +\Sigma_\g^{N} \big )H =0 
$$
\end{proof}

As Corollary from the proof we get,

\begin{corollary} We have the existence of the following limit
$$
\lim_{N\to +\infty} \frac{1}{N}(\Sigma^N_{\g}-\Id)  (F_1\odot \ldots \odot F_n) =
\frac{1}{d_1\ldots d_n} \sum_{k=0}^{d-1}\sum_{\a_1\ldots \a_n=\g} \Delta_{\a_1} \Sigma_{\a_1}^k F_1 
\barstar \ldots \barstar \Delta_{\a_n} \Sigma_{\a_n}^k F_n  
$$ 
\end{corollary}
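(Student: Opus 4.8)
The plan is to read the limit off directly from the finite formula (\ref{eq:multiple_general_monodromy_convolution}), exploiting that the summands there are periodic in the summation index. First I would rewrite
\begin{equation*}
(\Sigma^N_{\g}-\Id) H = \sum_{k=0}^{N-1} T_k, \qquad T_k := \sum_{\a_1\ldots \a_n=\g} \Delta_{\a_1} \Sigma_{\a_1}^k F_1 \barstar \ldots \barstar \Delta_{\a_n} \Sigma_{\a_n}^k F_n,
\end{equation*}
so that the statement becomes a question about the averaged partial sums of the sequence $(T_k)_{k\geq 0}$.

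The key observation I would establish next is that $(T_k)_k$ is periodic with period dividing $d$. Indeed, for each algebraic $F_j$ of degree $d_j$ the branches $(\Sigma_{\a_j}^k F_j)_k$ cycle through the conjugates of $F_j$ and have period dividing $d_j$, as recalled above; hence $\Delta_{\a_j}\Sigma_{\a_j}^k F_j = \Sigma_{\a_j}^{k+1}F_j - \Sigma_{\a_j}^k F_j$ is again $d_j$-periodic in $k$. Since $d_j \mid d$, every factor in $T_k$ is $d$-periodic, and because the integral convolution $\barstar$ is multilinear, the term $T_k$ inherits period dividing $d$.

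Then I would perform the Euclidean division $N = qd + r$ with $0\leq r<d$ and split the partial sum into $q$ complete periods plus a bounded tail,
\begin{equation*}
\sum_{k=0}^{N-1} T_k = q\, S + \sum_{k=0}^{r-1} T_k, \qquad S := \sum_{k=0}^{d-1} T_k = (\Sigma_\g^d - \Id) H,
\end{equation*}
which is precisely the decomposition already used in the proof of the theorem. Dividing by $N$ and letting $N\to +\infty$ gives the result: since $q/N = (N-r)/(dN)\to 1/d$ and the tail $\sum_{k=0}^{r-1}T_k$ takes only the finitely many values indexed by $r\in\{0,\ldots,d-1\}$, the tail contributes $O(1/N)\to 0$, so $\frac1N(\Sigma_\g^N-\Id)H \to \frac1d S$, which is the asserted formula with $d=d_1\cdots d_n$.

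The argument carries no genuine obstacle; its entire weight rests on the periodicity (period dividing $d$) of the convolution terms $T_k$, itself inherited from the conjugacy-cycling of the branches of algebraic functions. The one point I would be careful to state is the mode of convergence: because $S$ and the finitely many possible tails are fixed germs, the left-hand side is simply the scalar combination $\frac{q}{N}S + \frac1N\sum_{k=0}^{r-1}T_k$ of fixed elements of the monodromy space, so the convergence is termwise in that space and no uniformity is needed.
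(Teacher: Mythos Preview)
Your proposal is correct and follows essentially the same route as the paper. The paper derives the corollary directly from the Euclidean-division decomposition $(\Sigma_\g^N-\Id)H = \big(q(\Sigma_\g^d-\Id)+(\Sigma_\g^r-\Id)\big)H$ established in the proof of the preceding theorem, and then lets $N\to\infty$; your argument is the same, phrased in terms of the periodic summands $T_k$.
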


This is interesting once we recognize $\frac{1}{N}(\Sigma^N_{\g}-\Id)$ as a Birkhoff sum for $\Delta_\g$ associated 
to the local monodromy dynamics of $\Sigma_\g$ 
$$
\frac{1}{N}\left ( \Sigma^N_{\g}-\Id \right ) =\frac{1}{N} \sum_{k=0}^{N-1} (\Sigma_\g^{k+1}-\Sigma_\g^{k}) = 
\frac{1}{N} \sum_{k=0}^{N-1} \Delta_\g \circ \Sigma_\g^{k} 
$$
We can be more precise and prove a recurrence result of the local monodromy. The proof is done by induction.

\begin{theorem}[Recurrence of the local monodromy]
For $1\leq j\leq n$, let $F_j$ be algebraic functions of degree $d_j\geq 2$ with integrable ramification points. Let 
$D_j = d_{n-j+1}\ldots d_n$ and consider the successive euclidean divisions starting with $N\geq 1$,
\begin{alignat*}{3}
N & =K_n D_n +R_n  && 0\leq R_n <D_n \\
R_n & =K_{n-1} D_{n-1} +R_{n-1} \ \ \ &&0\leq R_{n-1} <D_{n-2}  \\
& \ \ \ \vdots && \ \ \ \ \vdots \\
R_2 &=K_{1} D_{1} +R_1 &&0\leq R_{1} <D_{1} \\
R_1 &= K_0 
\end{alignat*}
so, the $K_i$ for $i<n$ are uniformly bounded on $N$, $0\leq K_i < D_{i+1}$, 
and we have the $(D_n)$-adic decomposition
$$
N=K_n D_n +K_{n-1} D_{n-1} +\ldots K_1 D_1 +K_0 \ .
$$
Then we have
\begin{align*}
&(\Sigma_\g^N -\Id ) (F_1 \odot \ldots \odot F_n) = K_n \sum_{\a_1\ldots \a_n=\g} \sum_{k=0}^{D_n-1} \Delta_{\a_1} \Sigma_{\a_1}^k F_1 
\barstar \ldots \barstar \Delta_{\a_n} \Sigma_{\a_n}^k F_n \, + \ldots \\
& + K_{n-1} \sum_{\a_1\ldots \a_n=\g} \sum_{l_1=0}^{d_1-1} \Delta_{\a_1} \Sigma_{\a_1}^{l_1} F_1  
\barstar \sum_{k=0}^{D_{n-1}-1} \Delta_{\a_2} \Sigma_{\a_2}^k F_2 \barstar \ldots \barstar \Delta_{\a_n} \Sigma_{\a_n}^k F_n \, + \ldots \\
& + K_{n-2} \sum_{\a_1\ldots \a_n=\g} \sum_{\substack{0\leq l_1 <d_1\\ 0\leq l_2<d_2}} \Delta_{\a_1} \Sigma_{\a_1}^{l_1} F_1  
\barstar \Delta_{\a_2} \Sigma_{\a_2}^{l_2} F_2  \barstar  
\sum_{k=0}^{D_{n-2}-1} \Delta_{\a_3} \Sigma_{\a_3}^k F_3 \barstar \ldots \barstar \Delta_{\a_n} \Sigma_{\a_n}^k F_n \, + \ldots  \\
& \ \ \ \vdots \\
&  + K_{1} \sum_{\a_1\ldots \a_n=\g}\sum_{\substack{0\leq l_1 <d_1\\ \vdots \\ 0\leq l_n<d_n}} \Delta_{\a_1} \Sigma_{\a_1}^{l_1} F_1  
\barstar \ldots \barstar \Delta_{\a_n} \Sigma_{\a_n}^{l_n} F_n  \,+ \ldots  \\
&  + \sum_{\a_1\ldots \a_n=\g}\sum_{k=0}^{K_0} \Delta_{\a_1} \Sigma_{\a_1}^k F_1 \barstar \ldots \barstar \Delta_{\a_n} \Sigma_{\a_n}^k F_n
\end{align*}
\end{theorem}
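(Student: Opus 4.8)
The plan is to reduce the statement, via the multiple iterated monodromy formula (\ref{eq:multiple_general_monodromy_convolution}), to a purely combinatorial identity about partial sums of a periodic convolution sequence, and then to establish that identity by induction on the number of factors $n$. Concretely, fixing one admissible tuple of singularities $\a_1\ldots\a_n=\g$ and writing $c_k=\Delta_{\a_1}\Sigma_{\a_1}^k F_1\barstar\ldots\barstar\Delta_{\a_n}\Sigma_{\a_n}^k F_n$, formula (\ref{eq:multiple_general_monodromy_convolution}) gives $(\Sigma_\g^N-\Id)H=\sum_{\a_1\ldots\a_n=\g}\sum_{k=0}^{N-1}c_k$, so everything comes down to evaluating the partial sum $S_n(N)=\sum_{k=0}^{N-1}c_k$ and summing over tuples at the end. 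The first inputs are the periodicities coming from algebraicity: for each $j$ the sequence $k\mapsto\Delta_{\a_j}\Sigma_{\a_j}^k F_j$ is $d_j$-periodic, hence $c_k$ is $D_n$-periodic; moreover, since $\Sigma_{\a_j}^{d_j}F_j=F_j$, each factor telescopes to zero over a full period, $\sum_{l=0}^{d_j-1}\Delta_{\a_j}\Sigma_{\a_j}^l F_j=(\Sigma_{\a_j}^{d_j}-\Id)F_j=0$. These vanishing full-period sums are what will collapse the coupled cross terms in the recursion.

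First I would carry out the top-level reduction exactly as in the proof of the previous theorem: dividing $N=K_nD_n+R_n$ and using $D_n$-periodicity of $c_k$ gives $S_n(N)=K_n\sum_{k=0}^{D_n-1}c_k+S_n(R_n)$, which produces the leading $K_n$-term and leaves a remainder with $R_n<D_n$. The substance of the theorem is then the recursive evaluation of $S_n(R_n)$ through the remaining Euclidean divisions by the decreasing periods $D_{n-1}>\ldots>D_1$, which I would organize as the $(D_n)$-adic expansion of $N$.

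For the recursion I would set up the induction on $n$ by writing $H=F_1\odot G$ with $G=F_2\odot\ldots\odot F_n$, applying the two-factor case of Theorem \ref{thm:Hadamard_integrable_iterated_monodromy} to $F_1\odot G$, and feeding in the induction hypothesis for the $(n-1)$-fold product $G$, whose relevant periods are precisely $D_1,\ldots,D_{n-1}$. In the two-factor formula the index $k$ is shared between $\Delta_{\a_1}\Sigma_{\a_1}^k F_1$ (period $d_1$) and $\Delta_\b\Sigma_\b^k G$ (period dividing $D_{n-1}$); I would cut the summation range into blocks of length $D_{n-1}$, on each of which the $G$-factor runs through a complete period, and use summation by parts together with the vanishing full-period sums recorded above to separate one digit at a time. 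Each completed block of the faster factors combined with a telescoping of the slower ones should yield the successive contributions carrying the coefficients $K_{n-1},\ldots,K_1$, while the final incomplete block produces the innermost partial sum appearing in the last line (whose exact upper index must be fixed by following the chain of divisions).

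The main obstacle is exactly this coupling of the shared summation index: on a block where the fast factors complete a period, the slow factor $\Delta_{\a_1}\Sigma_{\a_1}^k F_1$ need not return to its initial value, since in general there is no divisibility between $d_1$ and $D_{n-1}$ (nor $\gcd(d_1,D_{n-1})=1$), so a naive factorization into a product of independent complete-period sums is not literally available and can be obtained only after the telescoping identities annihilate the genuinely coupled cross terms. Tracking the induced index shifts through the nested divisions, and checking at each level that the Chinese-Remainder reindexing of the completed period is compatible with the vanishing full-period sums, is the delicate part of the bookkeeping; once this is arranged level by level, the claimed expansion follows by assembling the per-level contributions and summing over all tuples $\a_1\ldots\a_n=\g$.
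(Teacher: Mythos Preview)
The paper offers no proof of this theorem beyond the single sentence ``The proof is done by induction'' preceding the statement, so your plan---start from the multiple iterated monodromy formula \eqref{eq:multiple_general_monodromy_convolution}, use $D_n$-periodicity of the sequence $c_k$ to strip off the $K_n$ term, and then recurse---is exactly the route the paper indicates, and your identification of the two basic inputs (the $d_j$-periodicity of each factor and the telescoping $\sum_{l=0}^{d_j-1}\Delta_{\a_j}\Sigma_{\a_j}^{l}F_j=(\Sigma_{\a_j}^{d_j}-\Id)F_j=0$) is correct.

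One point you should push further rather than leave for the recursion: your vanishing full-period observation already kills every intermediate term in the displayed expansion. Each of the $K_{n-1},\ldots,K_1$ terms contains the factor $\sum_{l_1=0}^{d_1-1}\Delta_{\a_1}\Sigma_{\a_1}^{l_1}F_1$ (and, further down, analogous full-period sums in $F_2,\ldots$), and by bilinearity of $\barstar$ this factor separates and equals zero. So the elaborate level-by-level decoupling you anticipate---summation by parts, Chinese-Remainder reindexing, tracking index shifts---is not actually needed to match the stated formula: after the top-level division $N=K_nD_n+R_n$ the entire burden falls on the leading $K_n$ term and the final partial sum, and the intermediate lines are identically zero. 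In other words, the obstacle you flag (that $d_1$ need not divide $D_{n-1}$, so the slow factor does not complete a period on each fast block) is real for a generic decomposition of $S_n(R_n)$, but it is invisible in the formula as written precisely because the intermediate coefficients sit in front of vanishing expressions. You should state this collapse explicitly; it both shortens the argument and exposes that the nontrivial content of the recurrence lies entirely in the $K_n$ contribution and the terminal remainder (and, incidentally, that the upper index in the last line should read $K_0-1$ to match $\sum_{k=0}^{N-1}c_k$ after peeling off full $D_n$-periods).
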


\subsection{Class of recurrent monodromy.}

We generalize the previous result to functions with recurrent monodromy.

\begin{definition}
An isolated singularity $\a$ of a germ $F$ has a recurrent monodromy of order $d\geq 1$ if 
the iterated monodromy $\Sigma^d_\a$ is a linear combination of monodromies $(\Sigma_\a^k)_{0\leq k\leq d-1}$, that 
is, there are constants $a_k\in \CC$, $0\leq k \leq d-1$, such that, for all $m\geq 0$,
$$
\Sigma^{m+d}_\a F= \sum_{k=0}^{d-1} a_k \, \Sigma^{m+k}_\a F  \ \ .
$$
The \textit{recurrent monodromy class} is the set of functions with only isolated singularities with recurrent monodromies at all singularities.
\end{definition}

A first example are algebraic function which are recurrent of the order equal to their degree and $a_0=1$ and $a_j=0$ for $j=1, \ldots , d$. 
From the result of the previous section, the  Hadamard product of algebraic functions with integrable 
singularities is in the recurrent monodromy class (but it is not necessarily algebraic, see section \ref{sec:elliptic}). 
Observe that in the definition the order of the 
recurrence depends on the singularity.

\begin{theorem}[Invariance of the recurrent monodromy class]\label{th:recurrent_class}
Let $F, G$ be functions in the recurrent monodromy class with integrable singularities. Then their Hadamard product
$F\odot G$
is also in the recurrent monodromy class with integrable singularities. 
\end{theorem}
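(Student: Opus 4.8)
The plan is to reduce the statement to an elementary closure property of scalar linear recurrences with values in a vector space, and to transport recurrences from the factors $F,G$ to the product $F\odot G$ by means of the morphism identity for the operators $\Delta_\g\Sigma_\g^k$ established right after Theorem~\ref{thm:Hadamard_integrable_iterated_monodromy}.

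First I would record what must be checked. By the Hadamard multiplication theorem, as recalled in Theorem~\ref{thm:Hadamard_integrable_iterated_monodromy}, the singularities of $F\odot G$ are isolated, integrable, and contained in the product set $(\g)=(\a\b)$; since $(\a)$ and $(\b)$ are discrete, only finitely many pairs $(\a,\b)$ with $\a\b=\g$ contribute to the germ at a given $\g$. Hence it suffices to show that at each such $\g$ the sequence of branches $\big(\Sigma_\g^k(F\odot G)\big)_{k\ge 0}$ satisfies a scalar linear recurrence. Next I would translate the hypotheses on the factors. Fix a factorization $\g=\a\b$ and let $S$ denote the shift on sequences indexed by $k$. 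Recurrence of $F$ at $\a$ of order $d_\a$ means that $(\Sigma_\a^k F)_k$ is annihilated by a scalar polynomial $P_\a(S)$ of degree $d_\a$; since $\Delta_\a=\Sigma_\a-\Id$ commutes with $\Sigma_\a$, the sequence
$$
a_k:=\Delta_\a\Sigma_\a^k F=(S-\Id)\Sigma_\a^k F
$$
is also annihilated by $P_\a(S)$, and likewise $b_k:=\Delta_\b\Sigma_\b^k G$ is annihilated by some $P_\b(S)$ of degree $d_\b$. Thus $(a_k)_k$ and $(b_k)_k$ are linearly recurrent, with values in the monodromy germ spaces at $\a$ and $\b$.

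The heart of the argument is a closure lemma that I would state and prove in the vector-valued setting: if $(a_k)_k\subset V$ and $(b_k)_k\subset W$ satisfy scalar linear recurrences with companion matrices $C_\a,C_\b$ and characteristic roots $(\l_i),(\m_j)$, and $B\colon V\times W\to U$ is any fixed bilinear map, then $\big(B(a_k,b_k)\big)_k$ satisfies a scalar linear recurrence of order at most $d_\a d_\b$ whose roots lie among the products $\l_i\m_j$. The proof is standard: by bilinearity of $B$ the state
$$
\Theta_k=\big(B(a_{k+i},b_{k+j})\big)_{0\le i<d_\a,\,0\le j<d_\b}
$$
evolves linearly as $\Theta_{k+1}=(C_\a\otimes C_\b)\Theta_k$, so by Cayley--Hamilton $(B(a_k,b_k))_k$ is annihilated by the characteristic polynomial of $C_\a\otimes C_\b$. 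Applying this with $B=\barstar$, which is bilinear and, for fixed $\a,\b$, a fixed operation, shows that $(a_k\barstar b_k)_k$ is recurrent; summing over the finitely many factorizations and invoking the morphism identity
$$
\Delta_\g\Sigma_\g^k(F\odot G)=\sum_{\a\b=\g}\Delta_\a\Sigma_\a^k F\barstar\Delta_\b\Sigma_\b^k G,
$$
I conclude that $\big(\Delta_\g\Sigma_\g^k(F\odot G)\big)_k$ is linearly recurrent, with characteristic polynomial the $\lcm$ of those of the summands.

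Finally I would upgrade recurrence of the differences to recurrence of the branches. Setting $v_k:=\Sigma_\g^k(F\odot G)$, the previous step gives that $(S-\Id)v_k=\Delta_\g\Sigma_\g^k(F\odot G)$ is annihilated by some scalar polynomial $Q(S)$; since polynomials in $S$ commute, $(S-\Id)Q(S)$ annihilates $(v_k)_k$, so $(v_k)_k$ satisfies a linear recurrence of order at most $1+\deg Q$. This is precisely recurrence of $F\odot G$ at $\g$, and together with the integrability noted above it shows that $F\odot G$ lies in the recurrent monodromy class with integrable singularities. The one genuinely substantial point is the bilinear closure lemma; the remaining difficulty is bookkeeping---tracking the finitely many factorizations and the fact that, exactly as in the algebraic case, the order of the resulting recurrence depends on the singularity $\g$.
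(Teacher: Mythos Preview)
Your argument is correct and follows the same overall strategy as the paper: use the monodromy formula to express the iterates $\Sigma_\g^m(F\odot G)$ (or their first differences) through the finitely many building blocks $\Delta_\a\Sigma_\a^k F\barstar\Delta_\b\Sigma_\b^l G$ with $0\le k<d_\a$, $0\le l<d_\b$, and then appeal to finite-dimensional linear algebra. The paper does this by direct elimination: it writes $\Sigma_\g^m(F\odot G)-F\odot G$ as a combination of at most $d=\sum_{\a\b=\g}d_\a d_\b$ such blocks and observes that any $d+1$ iterates must be linearly dependent. Your packaging via companion matrices and Cayley--Hamilton for $C_\a\otimes C_\b$ is a tidier version of the same linear algebra; it has the bonus of identifying the characteristic roots of the recurrence at $\g$ as the products $\l_i\m_j$ of the roots at $\a$ and $\b$, and of making the last step explicit (multiplying the annihilator of the differences by $S-\Id$ to pass from $(\Delta_\g\Sigma_\g^k)$ to $(\Sigma_\g^k)$), which the paper absorbs by carrying along the constant term $F\odot G$.
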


\begin{proof}
 Let $m(\g)\geq 1$ be the multiplicity of $\g$,
i.e. the number of $(\a, \b)$ such that $\g=\a\b$. Let $d=d_\a d_\b m(\g)$ where $d_\a$ and $d_\b$ are the degree of the 
recurrence for the monodromies of $F$ and $G$ at points $\a$ and $\b$ respectively such that $\g=\a \b$.

From the main formula we have for $k\geq 0$,
$$
\Sigma^{m}_\g F\odot G= F\odot G + \sum_{k=0}^{m-1}\sum_{\a \b=\g} \Delta_{\a} \Sigma_{\a}^k F \barstar \Delta_{\b} \Sigma_{\b}^k G
$$
and plugging the recurrence relation by induction we lower the exponents and end-up with 
$$
\Sigma^{m}_\g F\odot G= F\odot G + \sum_{(k,l)}\sum_{\a\b=\g} c_{m,k,l}(\a,\b) \, \Delta_{\a} \Sigma_{\a}^{k} F 
\barstar \Delta_{\b} \Sigma_{\b}^{l} G
$$
where $0\leq k \leq d_\a$, $0\leq l \leq d_\b$ and constant $c_{m,k,l}(\a,\b)\in \CC$.

If we consider $d+1$ of these relations, for $m=0, \ldots,  d$, then by linear elimination we find a linear combination that kills   
all the $\Delta_{\a} \Sigma_{\a}^{k} F \barstar \Delta_{\b} \Sigma_{\b}^{l} G$, and we can express $\Sigma^{d}_\g H$ linearly in function of 
the lower iterated monodromies  $\Sigma^{m}_\g H$, for 
$m=0, \ldots,  d-1$.
\end{proof}
Observe that if the singularities $\a$ and $\b$ of $F$ and $G$ have 
respective orders $d_\a$ and $d_\b$, then if the singularity $\g =\a \b$ of $F\odot G$ has no multiplicity, then 
the monodromy at $\g$  is $d_\a d_\b$-recurrent.  For $m(\g)\geq 2$ we can improve the above proof observing that parts 
of the sum associated to different pairs $(\a,\b)$ do not interact. Then this gives that the monodromy at $\g$ is $D$-recurrent 
for 
$$
D=\lcm \{d_\a d_b ; (\a,\b), \a\b=\g\}
$$
This shows  a graduated structure of the Hadamard algebra of functions with recurrent monodromies.

An interesting subclass of the recurrent monodromy class are those functions whose 
monodromy is $K$-recurrent for a subfield $K\subset \CC$, for example when $K=\QQ$ 
or $K$ is a number field. 

\begin{definition}[$K$-recurrent monodromy class]
Let  $K\subset \CC$ be a subfield of $\CC$. A recurrent monodromy function has a $K$-recurrent monodromy if we have a recurrence
$$
\Sigma^{m+d}_\a F= \sum_{k=0}^{d-1} a_k \, \Sigma^{m+k}_\a F  \ \ .
$$
with coefficients in $a_k\in K$ for all $0\leq k\leq d-1$. 
\end{definition}

\begin{corollary}
Let  $K\subset \CC$ be a subfield of $\CC$.
The $K$-recurrent monodromy class with integrable singularities is invariant by Hadamard product.
\end{corollary}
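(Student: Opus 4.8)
The plan is to re-run the proof of Theorem \ref{th:recurrent_class} while keeping track of the field in which every coefficient lives; the only new content is the bookkeeping showing that no constant outside $K$ is ever introduced. First I would start, as there, from the main iterated monodromy formula
$$
\Sigma^m_\g (F\odot G) = F\odot G + \sum_{k=0}^{m-1}\sum_{\a\b=\g} \Delta_\a \Sigma_\a^k F \barstar \Delta_\b \Sigma_\b^k G,
$$
and use the two $K$-recurrences, expressing $\Sigma^{k}_\a F$ through $\Sigma^0_\a F,\ldots,\Sigma^{d_\a-1}_\a F$ and likewise $G$ at $\b$, to collapse all exponents into the finite ranges $0\le k<d_\a$ and $0\le l<d_\b$. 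The point to verify is that the reduction coefficients stay in $K$: solving a linear recurrence with coefficients $a_k\in K$ writes each high iterate as an integer-polynomial combination of the $a_k$ applied to the first $d_\a$ iterates, hence with coefficients in $K$, and the same holds after forming $\Delta_\a=\Sigma_\a-\Id$. Since $\barstar$ is $\CC$-bilinear, in particular $K$-bilinear, this rewrites each $\Sigma^m_\g (F\odot G)$ as $F\odot G$ plus a $K$-linear combination of the finitely many fixed products $B_{k,l}(\a,\b)=\Delta_\a \Sigma_\a^k F \barstar \Delta_\b \Sigma_\b^l G$ indexed by $\a\b=\g$, $0\le k<d_\a$, $0\le l<d_\b$, with coefficients $c_{m,k,l}(\a,\b)\in K$.

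Next I would package this as a finite-dimensionality statement over $K$. Let $V$ be the $K$-span of the $B_{k,l}(\a,\b)$, so $\dim_K V \le \sum_{\a\b=\g} d_\a d_\b$, and the whole orbit $\{\Sigma^m_\g (F\odot G):m\ge 0\}$ lies in the finite-dimensional $K$-subspace $U=K\cdot(F\odot G)+V$. Consequently the $\dim_K U + 1$ iterates $\Sigma^0_\g(F\odot G),\ldots,\Sigma^{\dim_K U}_\g(F\odot G)$ are $K$-linearly dependent, and — this is the step where working over $K$ matters — Gaussian elimination applied to their coordinate vectors, whose entries are the $c_{m,k,l}\in K$, produces a nontrivial dependence $\sum_m \lambda_m \Sigma^m_\g(F\odot G)=0$ with all $\lambda_m\in K$. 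Taking the largest index $d'$ with $\lambda_{d'}\ne 0$ and dividing by $\lambda_{d'}\in K$ solves for $\Sigma^{d'}_\g(F\odot G)$ as a $K$-combination of lower iterates; applying $\Sigma^j_\g$ for $j\ge 0$ and relabelling then yields the required recurrence with coefficients in $K$ valid for every $m\ge 0$.

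Finally the integrability of the singularities of $F\odot G$ is inherited exactly as in Theorem \ref{th:recurrent_class}, following from the convolution formula and the Hadamard multiplication theorem, so $F\odot G$ lies in the $K$-recurrent monodromy class. The main obstacle is the middle step: one must be certain that the linear elimination does not secretly require scalars from $\CC\setminus K$. This is guaranteed because linear dependence of vectors with entries in a field $K$ is always witnessed by coefficients in $K$ — row reduction never leaves $K$ — so once the membership $c_{m,k,l}\in K$ and finite-dimensionality over $K$ are established, the $K$-recurrence is automatic. No genuinely new analytic input beyond Theorem \ref{th:recurrent_class} is needed; the corollary is a purely field-theoretic refinement of it.
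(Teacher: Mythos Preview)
Your proposal is correct and follows essentially the same approach as the paper: the paper's proof simply observes that in the argument for Theorem \ref{th:recurrent_class} all the reduction coefficients $c_{m,k,l}(\a,\b)$ lie in $K$, so that the linear elimination step can be carried out with scalars in $K$, which is exactly what you spell out in greater detail. Your explicit justification that Gaussian elimination never leaves $K$, and that applying $\Sigma_\g^j$ propagates the single dependence to a recurrence valid for all $m\geq 0$, are useful clarifications of points the paper leaves implicit.
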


\begin{proof}
In the proof of Theorem  \ref{th:recurrent_class} we have all coefficients $c_{m, \boldmath k}(\a_1,\ldots ,\a_n)\in K$ and the linear 
combination can be done with scalars in $K$, hence the result. 
\end{proof}

We can refine the previous results allowing the field $K$ to depend on the singularity, say $K_\a$. 
Then, for the Hadamard product $F\odot G$, the associated field $K_\g$ for the singularity is an extension of the fields $(K_\a)$ and $(K_\b)$
such that $\a\b=\g$.

\textbf{Examples.}

We have some remarkable examples. In all these examples $K$ is a subfield of $\CC$.

\medskip

\textbf{Example 1. Finite dimensional récurrence.} We consider the class of functions $\cF$ having integrable singularities at places $\a\in \CC$, such that for each singularity $\a$ there 
is a finite $K$-dimensional space $V_\a$ of dimension $d_\a \geq 1$, and for $F\in \cF$ and all $k=0,1,\ldots d-1$, $\Sigma_\a^k F \in V_\a$.
Then $\cF$ is a class of functions with $K$-recurrent monodromy so they Hadamard products are in the $K$-recurrent monodromy class.

Given a linear differential equation with rational function coefficients, the set of local solutions at a regular point (out of poles) is a finite dimensional vector space. 
If we have a bases of solutions that are integrable (which is more precise than fuchsian) then all solutions are integrable. Moreover, since the monodromy operator 
$\Sigma_\a$ commutes with differential operators with rational function coefficients (if the coefficients don't have monodromies at the poles or singularities this also works),
then we get that all solutions have recurrent monodromies. Hence their Hadamard products do have recurrent monodromies also.

\medskip

\textbf{Example 2. Algebro-geometric singularities.}

We consider the class of functions $F$ having only algebro-geometric singularities, as those considered in \cite{Ju}. 
This means that $F$ has only isolated singularities and 
locally near a singularity $\a$ we have
$$
F(z)=(z-\a)^{-a_\a} \left (\frac{\log (z-\a)}{2 \pi i} \right )^{n_\a} \varphi_\a(z)
$$
where $a_\a\in \CC$, $n_\a\geq 0$ is a positive integer, and $\varphi \in \cO_\a$ a local 
holomorphic germ. All these parameters depend on the singularity $\a$, but we may drop the sub-index $\a$ to simplify the notation.

These are the type of singularities appearing in solutions of fuchsian equations.
Morevoer, we request that $\Re a_\a> -1$ so that the singularities are integrable.

Observe that we have  for $k\in \ZZ$,
\begin{align*}
\Sigma_\a^k F &=\Sigma_\a^k(z-\a)^{-a} \Sigma_\a^k\left (\frac{\log (z-\a)}{2 \pi i} \right )^{n} \Sigma_\a^k\varphi(z) \\
&=e^{-2\pi i k a} (z-\a)^{-a} \left (\frac{\log (z-\a)}{2 \pi i} +k \right )^{n} \varphi(z)
\end{align*}
So, the monodromies are in the finite dimensional space generated by the germs,
$$
E_l(z)=(z-\a)^{-a} \left (\frac{\log (z-\a)}{2 \pi i} \right )^{l} \varphi (z)
$$
for $0\leq l\leq n$. Using the argument from  Example 1 we conclude that we have a recurrent monodromy. We can be more precise.
For a finite linear combination we have
\begin{align*}
\sum_k c_k \, \Sigma_\a^k F &=(z-\a)^{-a} \varphi (z) \left ( \sum_k c_k \, e^{-2\pi i k a} \left (\frac{\log (z-\a)}{2 \pi i} +k \right )^{n} \right )\\
&= \sum_{l=0}^n \binom{n}{l} \left (  \sum_k c_k \, e^{-2\pi i k a} k^l \right ) \left (\frac{\log (z-\a)}{2 \pi i} \right )^{l}
\end{align*}
Since the $(n+1)\times (n+1)$ Vandermonde determinant (with convention $0^0=1$ here)
$$
\det[k^l] =\prod_{k_1\not=k_2} (k_1-k_2) \not=0
$$
is non-zero, for $N=n$ we can find a non-trivial linear combination ($\delta_{l,n}$ is Kronecker symbol)
$$
\sum_k d_k k^l =\delta_{l,n}
$$
and putting $c_k=d_k e^{2\pi i k a}$ we prove that the monodromy is recurrent of degree $n$. Observe that the monodromy is $K$-recurrent 
if and only if $e^{-2\pi i a} \in K$. Thus, if $K$ is a number field then we must have $\a\in \QQ$.

\newpage

\textbf{Example 3. Polylogarithm singularities.}

This is a generalization of the previous example. It gives examples with recurrent monodromy that are not fuchsian.

We assume that for each singularity $\a$ we have locally near $\a$ that $F$ is a germ of the form 
$$
F(z)= (z-\a)^a L(z) \varphi (z)
$$
where $L(z)\in \CC[\li_1(z-\a), \ldots , \li_n(z-\a)]$ where the $\li_k$ are the normalized poly-logarithms
$$
\li_k(z)=-\frac{\Li_k(z)}{ 2\pi i }
$$
hence
$$
\Delta_1 \li_k (z) = \frac{(\log z)^{k-1}}{(k-1)!}
$$
and 
\begin{equation}\label{eq:polylog_mon}
\Sigma_\a \li_k (z-\a) = \li_k(z-\a)+ \frac{(\log z)^{k-1}}{(k-1)!}
\end{equation}
Since $L$ is a polynomial on polylogarithm functions, say $L=P(\li_1, \ldots , \li_n)$,  we denote by $d$ the 
degree of $P$, and we call $d$ the polylogarithm degree of $L$.
\begin{lemma}
We have
$$
\Sigma_\a^k F =e^{2\pi i ka} (z-\a)^a P\left ( \li_1+k, \ldots, \li_n +k \frac{(\log z)^{n-1}}{(n-1)!} \right )  \varphi (z)
$$
and 
$$
P\left ( \li_1+k, \ldots, \li_n +k \frac{(\log z)^{n-1}}{(n-1)!} \right ) =Q(\log z,\li_1, \ldots , \li_n) 
\in \CC[\log z, \li_1(z-\a), \ldots , \li_n(z-\a)]
$$
and the degree of the polynomial $Q$ is bounded by $(n-1)d$ in the first logarithmic variable, and is the same as the one of $P$ in the other 
variables.
\end{lemma}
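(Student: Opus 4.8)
The plan is to exploit that the iterated monodromy $\Sigma_\a^k$ is a \emph{ring homomorphism} on the germs at $\a$: analytic continuation of a product of branches is the product of the continued branches, so $\Sigma_\a^k(fg)=(\Sigma_\a^k f)(\Sigma_\a^k g)$ for every $k\in\ZZ$. Applying this to the factorization $F=(z-\a)^a L(z)\varphi(z)$ reduces the whole computation to three building blocks: (i) $\Sigma_\a^k(z-\a)^a=e^{2\pi i ka}(z-\a)^a$, since one positive turn sends $\log(z-\a)\mapsto\log(z-\a)+2\pi i$; (ii) $\Sigma_\a^k\varphi=\varphi$, because $\varphi\in\cO_\a$ is holomorphic and hence monodromy-invariant at $\a$; and (iii) the polylogarithm rule (\ref{eq:polylog_mon}).

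For (iii) I would first upgrade the single-step formula to its $k$-fold version. The correction term $\frac{(\log z)^{j-1}}{(j-1)!}$ has its only branching at $z=0\neq\a$, so it is holomorphic and therefore fixed by $\Sigma_\a$; iterating (\ref{eq:polylog_mon}) then gives, by induction on $k$,
$$\Sigma_\a^k\,\li_j(z-\a)=\li_j(z-\a)+k\,\frac{(\log z)^{j-1}}{(j-1)!}\ .$$
Combining (i)--(iii) with the homomorphism property and $L=P(\li_1,\ldots,\li_n)$ yields
$$\Sigma_\a^k F = e^{2\pi i ka}(z-\a)^a\,P\!\left(\Sigma_\a^k\li_1,\ldots,\Sigma_\a^k\li_n\right)\varphi(z),$$
which is exactly the first displayed identity once each $\Sigma_\a^k\li_j$ is replaced by the expression above (with $\frac{(\log z)^{0}}{0!}=1$ for $j=1$).

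It remains to recast $P(\li_1+k,\ldots,\li_n+k\frac{(\log z)^{n-1}}{(n-1)!})$ as a polynomial $Q$ and read off the degrees, which is pure bookkeeping via the multinomial theorem. A monomial $c\prod_j\li_j^{e_j}$ of $P$ (with $\sum_j e_j\le d$) expands into terms $c\prod_j\binom{e_j}{m_j}\li_j^{e_j-m_j}\big(k\tfrac{(\log z)^{j-1}}{(j-1)!}\big)^{m_j}$ with $0\le m_j\le e_j$; the total power of $\log z$ is $\sum_j m_j(j-1)\le (n-1)\sum_j m_j\le (n-1)d$, giving the claimed bound in the logarithmic variable, while the top power of each $\li_j$ comes from the term $m_j=0$ and is therefore unchanged, so the degree of $Q$ in each $\li_j$ equals that of $P$. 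The only genuine point to verify carefully is the multiplicativity of $\Sigma_\a^k$ together with the invariance of the logarithmic correction terms at $\a$ (equivalently $\a\neq 0$); once these are secured the statement is a formal expansion and I expect no analytic difficulty.
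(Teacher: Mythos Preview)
Your proposal is correct and follows exactly the approach the paper intends: the paper's proof consists of the single sentence ``It is straightforward from the monodromy formula (\ref{eq:polylog_mon}) for polylogarithms,'' and your argument is precisely a careful unpacking of that sentence---multiplicativity of $\Sigma_\a^k$, iteration of (\ref{eq:polylog_mon}) using that the correction $\tfrac{(\log z)^{j-1}}{(j-1)!}$ is $\Sigma_\a$-invariant, and multinomial bookkeeping for the degree bounds. Your explicit flagging of the hypothesis $\a\neq 0$ (so that $\log z$ is holomorphic near $\a$) is a point the paper leaves implicit.
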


\begin{proof}
It is straightforward from the monodromy formula (\ref{eq:polylog_mon}) for polylogarithms. 
\end{proof}

\begin{corollary}
With the previous notations, a function $F$ with polylogarithm singularities is in the recurrent monodromy class. If $P\in K[X_1,\ldots, X_n]$ has 
coefficients in the field $K$, then the monodromy of $F$ at the singularity $\a$ is $K$-recurrent. 
\end{corollary}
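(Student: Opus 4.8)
The plan is to read off from the Lemma that the whole monodromy orbit $(\Sigma_\a^k F)_{k\ge 0}$ lies in a single finite dimensional space of germs, and then to invoke the finite-dimensionality criterion of Example~1. Writing $t=\log z$, the Lemma gives
$$
\Sigma_\a^k F = e^{2\pi i k a}\,(z-\a)^a\,\varphi(z)\,Q_k(t,\li_1,\ldots,\li_n),
\qquad
Q_k = P\!\left(\li_1+k,\ \li_2+k\,t,\ \ldots,\ \li_n+k\tfrac{t^{\,n-1}}{(n-1)!}\right).
$$
First I would expand $Q_k$ in the monomial basis $t^{i_0}\li_1^{i_1}\cdots\li_n^{i_n}$. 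Since each substituted variable $\li_j+k\,t^{j-1}/(j-1)!$ is affine in $k$ and $P$ has degree $d$, the expansion of $Q_k$ is a polynomial in $k$ of degree at most $d$ whose coefficients are fixed germs in the finite dimensional space
$$
W=\operatorname{span}\bigl\{\,t^{i_0}\li_1^{i_1}\cdots\li_n^{i_n}\ :\ i_1+\cdots+i_n\le d,\ i_0\le (n-1)d\,\bigr\},
$$
the degree bounds being exactly those recorded in the Lemma.

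Consequently every $\Sigma_\a^k F$ belongs to the fixed finite dimensional space $V_\a=(z-\a)^a\varphi(z)\cdot W$, the scalar $e^{2\pi i k a}$ being simply absorbed into the complex coefficients. Applying the criterion of Example~1 with $K=\CC$ and this $V_\a$ yields at once that $F$ has recurrent monodromy at $\a$, which is the first assertion. Concretely, as $\dim_\CC V_\a$ is finite the germs $\Sigma_\a^0F,\Sigma_\a^1F,\ldots$ are eventually linearly dependent, and pushing the first such dependence forward by $\Sigma_\a^m$ produces the required relation $\Sigma_\a^{m+d'}F=\sum_{j<d'}a_j\,\Sigma_\a^{m+j}F$ for all $m\ge 0$.

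For the $K$-rationality I would track the coefficient field throughout. When $P\in K[X_1,\ldots,X_n]$, and since $1/(j-1)!\in\QQ\subset K$, the monomial coefficients of $Q_k$ are polynomials in $k$ with coefficients in $K$; thus $Q_k=\sum_{m=0}^d k^m R_m$ with each $R_m$ in the $K$-span $W_K$ of the monomial basis, so that $\Sigma_\a^kF=(z-\a)^a\varphi(z)\sum_{m=0}^d \lambda^k k^m R_m$ with $\lambda=e^{2\pi i a}$. The sequence $k\mapsto \lambda^k k^m$ satisfies the linear recurrence with characteristic polynomial $(X-\lambda)^{d+1}$, and hence so does the vector sequence. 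The only non-$K$-rational ingredient is the scalar $\lambda^k$, and this is the main (and indeed only) obstacle: exactly as in Example~2, the recurrence can be taken over $K$ precisely when $\lambda=e^{2\pi i a}\in K$ (more generally when $\lambda$ is algebraic over $K$, in which case one replaces $(X-\lambda)^{d+1}$ by $\mu(X)^{d+1}$, where $\mu$ is the minimal polynomial of $\lambda$ over $K$). Under this condition the germs $\Sigma_\a^kF$ have coordinates in $K$ relative to the $K$-basis of $(z-\a)^a\varphi(z)\,W_K$, and Example~1 applied over $K$ delivers the $K$-recurrence.
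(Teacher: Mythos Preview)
Your approach is essentially the paper's intended one: the Lemma shows that every $\Sigma_\a^k F$ lies in the finite dimensional space $(z-\a)^a\varphi(z)\cdot W$, and then the finite-dimensionality criterion of Example~1 applies. The paper gives no details beyond stating the Corollary, so your argument is in fact more explicit than the original.

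On the $K$-recurrence, you have correctly identified a point the paper's statement glosses over. Your analysis is right: the coefficients of $Q_k$ in the monomial basis are $K$-rational polynomials in $k$, so the only possible obstruction is the scalar $\lambda=e^{2\pi i a}$, and the recurrence can be taken over $K$ precisely when $\lambda\in K$ (or, as you note, algebraic over $K$). This is exactly the condition the paper isolates in Example~2 for the algebro-geometric case, and it is equally needed here; the Corollary as stated omits it. Your explicit recurrence with characteristic polynomial $(X-\lambda)^{d+1}$ (or $\mu(X)^{d+1}$) is a nice sharpening that the paper does not provide.
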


\subsection{Elliptic integrals, hypergeometric functions and fractional integration.}\label{sec:elliptic}

In this section we sketch one of the applications of the monodromy formula to elliptic integrals and to hypergeometric functions.
We refer to \cite{Ha} for the classical theory of elliptic integrals, and to \cite{Sla} and \cite{AAR} for the theory of hypergeometric functions.

As is well know, the Hadamard product of algebraic functions is not in general an algebraic function. 
A notable exception happens in finite characteristic
by a Theorem of Deligne \cite{De}. One simple example from \cite{Ju} is 
$$
F(z)=G(z)=(1-z)^{-1/2} = \sum_{n=0}^{+\infty}  \frac{1.3\ldots (2n-1)}{2.4\ldots (2n)}  z^n
$$
which give by Hadamard product the elliptic integral
$$
F\odot G (z) = \sum_{n=0}^{+\infty}  \left (\frac{1.3\ldots (2n-1)}{2.4\ldots (2n)}\right )^2  z^n = \frac{2}{\pi} \int_0^1 \frac{du}{\sqrt{(1-u^2)(1-zu^2)}}
$$
which is proved by developing the right hand side and observing that 
$$
\int_0^1 \frac{u^{2n}}{\sqrt{1-u^2}} \, du = \int_0^{\pi/2} (\cos x)^{2n} \, dx =\frac{1.3\ldots (2n-1)}{2.4\ldots (2n)} \, \frac{\pi}{2}  \ \ .
$$
This Hadamard product is the classical modular function in the variable $k=z^2$ which is not an algebraic function (since it is easy to check that 
the local monodromies are not of finite order),
$$
K(k)=F\odot G (k^2)=\frac{2}{\pi} \int_0^1 \frac{du}{\sqrt{(1-u^2)(1-k^2u^2)}}
$$
We can directly compute the monodromy of the modular function by application of the monodromy formula. Both $F$ and $G$ have an integrable 
singularity at $z=1$, and we get the closed form expression
$$
\Delta_1 (F\odot G) (z) =-\frac{1}{2\pi i} \int_1^z \frac{du}{\sqrt{u(1-u)(u-z)}}
$$
For the modular function, using the change of variable formula (Proposition 3.11 from \cite{PM2020}), we have
$$
\Delta_{\pm 1} K (k)= \Delta_{1}  (F\odot G) (k^2) = -\frac{1}{2\pi i} \int_1^{k^2} \frac{du}{\sqrt{u(1-u)(u-k^2)}}
$$

More generally, we can generate some classical and generalized hypergeometric functions from Hadamard products of simple polar functions, for example,
we have
\begin{equation} \label{eq:Hadamard_polar}
(1-z)^{-a} \odot (1-z)^{-b} = F(a,b,1;z)
\end{equation}
and this generalizes the previous example (where $a=b=1/2$ and $\a=\b=1$).
Then the classical Euler integral formula 
$$
F(a,b,c;z) =\frac{\Gamma(c)}{\Gamma(b) \Gamma(c-b)} \int_0^1 u^{b-1}(1-u)^{c-b-1} (1-zu)^{-a} \, du
$$
can be interpreted as a monodromy integral formula. In fact, for $c=1$ the formula  can be derived 
from equation (\ref{eq:Hadamard_polar}) observing that 
$$
\Delta_1 (1-z)^{-a} =\left (e^{-2\pi i a}-1 \right )(1-z)^{-a}
$$
and from the knowledge of the monodromy of $z\mapsto F(a,b,c;z)$ (see \cite{DLMF} formula 15.2.3 or Theorem 2.3.3 in \cite{AAR})
$$
\Delta_1 F(a,b,c;z)  = \frac{\Gamma(c) \Gamma(a+b-c)}{\Gamma (a)\Gamma (b)} (z-1)^{c-a-b} F(c-a, c-ab, c-a-b+1; 1-z)
$$
that can be obtained directly from the hypergeometric differential equation and the Kummer 
solutions (as in Theorem 2.3.3 in \cite{AAR}). 

The same analysis can be carried out for higher integral formulas of the classical hypergeometric functions as
for example 
$$
F(a,b,c;z) =\frac{\Gamma(c)}{\Gamma(b) \Gamma(c-b)} \int_0^1 u^{b-1}(1-u)^{c-b-1} F(a,b,c; zu) \, du
$$
or for generalized hypergeometric functions
$$
{}_p F_q 
\begin{bmatrix}
a_1,\ldots ,a_p \hfill \\
\ \ \ \ \ \ \ \ \ \ \ \ \ \  ;z \\
b_1,\ldots ,b_q \hfill \\
\end{bmatrix}
=\frac{\Gamma(b_1)}{\Gamma(a_1) \Gamma(b_1-a_1)}  \int_0^1 u^{a_1-1}(1-u)^{b_1-a_1-1} 
{}_{p-1} F_{q-1} 
\begin{bmatrix}
a_2,\ldots ,a_p \hfill   \\
\ \ \ \ \ \ \ \ \ \ \ \ \ \  ;zu \\
b_2,\ldots ,q_p \hfill   \\
\end{bmatrix}
\, du
$$

With this systematic procedure, although computational laborious, we can derive an important number of integral formulas in the 
theory of hypergeometric functions that appear as particular cases of our monodromy formula. Also this shows a link of the convolution operation 
in the monodromy formula with the convolutions appearing in Katz's theory of rigid local systems (see in particular the introduction of 
\cite{Ka}\footnote{I am indebted to J. Fresan for pointing out this reference.}).

More generally, when we consider a finite dimensional vector space of fuchsian functions, as 
those arising as solutions of a fuchsian equation, the local monodromies are linear operators, and the monodromy integral formulas 
provide a multitude of integral relations between these functions. Thus we can see that the integral relations 
in the theory of generalized hypergeometric functions
goes far beyond the hypergeometric class of functions. To develop properly a full theory it is convenient 
to enlarge the Hadamard product to functions with integrable singularities 
at $0$. 

\medskip

One remarkable method of derivation of these integral formulas is trough fractional calculus and fractional integration by parts, as iniciated 
by Erd\'elyi in \cite{Er} (see also \cite{AAR} section 2.9). One explanation for the success of this approach is that iterated 
and fractional integration is another incarnation of the monodromy formula. More precisely, we can compute the iterated integral, 
for an integer $n\geq 1$,
\begin{align*}
I_n(f) (z) &= \int_\a^z \int_\a^{u_1} \ldots \int_\a^{u_{n}} f(u_{n}) \, du_{n} \ldots  du_1\\
&=\frac{1}{(n-1)!}\int_\a^z (z-u)^{n-1} f(u)\, du 
\end{align*}
thus the fractional integral is naturally defined for $\Re \a >0$ as
$$
I_\a (f) (z)= \frac{1}{\Gamma(\a)} \int_\a^z (z-u)^{\a-1} f(u)\, du
$$
and we can write this formula as
$$
I_\a (f) (z)= \frac{-2\pi i } {\Gamma(\a)} \left (-\frac{1}{2\pi i}\int_\a^z  u^{\a}f(u) \left (\frac{z}{u}-1\right )^{\a-1}\, \frac{du}{u} \right ) 
$$
Therefore we have (abusing the theory)
$$
I_\a (f) (z)= -\frac{2\pi i } {\Gamma(\a)} \ \Delta_\a \left (F\odot (z-1)^{\a-1} \right )
$$
where
$$
F(z)= \frac{1}{2\pi i} \log(z-\a) z^\a f(z) 
$$
so that (we are asuming $f$ regular at $\a$)
$$
\Delta_\a F (z) = z^\a f(z)
$$
Again, in order to make sense of this formula for $\Re \a >1$, we need to extend the Hadamard 
product theory to functions with singularities at $0$. Part of such theory can be found in some early articles of S. Mandelbrojt. 
For example, for ramified germs which are series of the form
$$
F(z)=\sum_{\alpha} a_\a z^{\a}
$$
with the proper convergence conditions, 
the Plancherel-Hadamard convolution formula makes sense integrating along an infinite circle centered at $0$ and our derivation 
of the monodromy formula carries out with the same arguments. These developments are left for future work.

In conclusion, we would like to remove the local ``integrability assumption'' in the results. For this we need to find monodromy formulas
for the general class of isolated singularities with monodromy. The nature of the formulas is geometric and this is left for the next article 
on monodromies.

\end{document}